\newtheorem{theorem}{Theorem}
\newtheorem{lemma}[theorem]{Lemma}
\newtheorem{corollary}[theorem]{Corollary}
\newtheorem{proposition}[theorem]{Proposition}
\theoremstyle{definition}
\theoremstyle{definition}
\newtheorem{remark}[theorem]{Remark}
\numberwithin{equation}{section}
\numberwithin{theorem}{section}
 \def\cN{\mathcal N}
\def\cL{\mathcal L}
\def\cK{\mathcal K}
\def\cP{\mathcal P}
\def\cQ{\mathcal Q}
\def\cE{\mathcal E}
\def\cI{\mathcal I} 
\def\cC{{\mathbf C}}
\def\ep{\varepsilon}
\def\eps{\epsilon}
\def\lesssim{\stackrel{{}_<}{{}_\sim}}
\def\nc{\nabla\!\!\cdot\!}
\def\cn{\!\cdot\!\!\nabla}
\def\pa{\partial}
\def\pt{\pa_t} 
\def\px{\pa_x} 
\def\ccdot{{\hspace{-0.15mm}\cdot\hspace{-0.15mm}}}
\def\mR{{\mathbf R}}
\def\mD{{\mathbf{D}}}
\def\adm{{\mathbf X}}
\def\kernel{\mathrm{Ker} }
\def\image{\mathrm{Img}}
\def\nm{{{\mathsf{n}}}}
\def\vB{\mathbf B}
\def\vU{U}
\def\vV{\overline{U}}
\def\vu{u}
\def\vv{\overline{u}}
\def\nt{\nabla\!\!\times\!\!}
\def\vUn{U^P}
\def\vUa{U^Q}
\def\Va{V^\alpha}
\def\de{\delta}
\def\ba{\begin{array}}
\def\ea{\end{array}}
\def\be{\begin{equation}}
\def\ee{\end{equation}}
\def\bal{\begin{align}}
\def\eal{\end{align}}
\def\bals{\begin{align*}}
\def\eals{\end{align*}}
\def\beb{\begin{exampleblock}}
\def\eeb{\end{exampleblock}}
\def\bbk{\begin{block}}
\def\ebk{\end{block}}
\def\bpm{\begin{pmatrix}}
\def\epm{\end{pmatrix}}
\def\bma{\begin{pmatrix}}
\def\ema{\end{pmatrix}}
\def\bse{}
\def\ese{}
\def\bc{\bigr|_{\pa\Omega}}
\newcommand{\mmbox}[1]{\quad\mbox{#1}\quad}
\newcommand{\en}[1]{|\!|\!|#1|\!|\!|}
\begin{document}
\title[low-Mach-number  Euler equations]{low-Mach-number Euler equations with solid-wall boundary condition and general initial data}

\author[Bin Cheng]{Bin Cheng}
 \address{\newline
        Department of Mathematics,
    University of Michigan\newline
    530 Church St.
    Ann Arbor, MI 48109 USA}
\email[Bin Cheng]{bincheng@umich.edu}
 
\date{June 06, 2010}
\keywords{Compressible Euler equations; singular limit; initial-boundary value problem; low-Mach-number; ill-prepared initial data.}
\subjclass{35Q31 (Primary) 35L50, 76N15 (Secondary)}

\begin{abstract}We prove that the divergence-free component of the compressible Euler equations with solid-wall boundary condition converges strongly towards the incompressible Euler equations at the same order as the Mach number. General initial data   are considered and are not necessarily close to the divergence-free state. Thus,  large amplitude of fast oscillations persist  and interact through nonlinear coupling without any dissipative or dispersive machanism. It is then shown, however, that the contribution from fast oscillations to the slow dynamics through nonlinear coupling is of the same order as the Mach number when \emph{averaged in time}. The structural condition of a vorticity equation plays a key role in our argument.  
\end{abstract}  
\maketitle
\section{Introduction}
Hyperbolic Partial Differential Equations (PDEs) of multiscale nature have seen rapidly growing applications in recent years. We conduct a theoretical investigation in this paper concerning a prototypical example: the compressible Euler equations of barotropic fluids in a bounded domain in the low-Mach-number regime. The system of equations, in terms of \emph{total} density $\hat{\rho}$ and velocity $u$, read\[\left\{\begin{split}\pt\hat{\rho} +\nc(\hat{\rho}\vu)&=0,\\
\hat{\rho} (\pt\vu +\vu\cn\vu)+{\nabla p(\hat{\rho})\over\ep^2}&=0,\end{split}\right.\]
where Mach number $\ep\ll1$ brings in fast scale of oscillations. The equation of state is  \be\label{eq:state}\mbox{pressure=${p(\hat{\rho})}$ \;\;with\;\;  $p(\cdot)\in C^\infty,\;p(1)=0,\;p'(1)=1$.
}\ee
The spatial domain $\Omega\in \mR^{\mD}$ ($\mD=2$ or 3) is bounded with smooth boundary $\pa\Omega$. It is connected but not necessarily simply connected.  The solid-wall boundary condition is prescribed as\[u\ccdot\nm\bigr|_{\pa\Omega}=0.\]
Here and below, $\nm=\nm(x)$ denotes the outward normal at $x\in\pa\Omega$. Without loss of generality, impose \[{1\over|\Omega|}\int_\Omega\hat{\rho}\,dx=1\]since $\int_\Omega\hat{\rho}\,dx$ is always conserved by the dynamics.

Having  $\hat{\rho}\approx1$ as the total density, define the density perturbation\be\label{def:rho}\rho:={\hat{\rho}-1\over\ep},\ee
and rewrite the above system for the unknown pair $(\rho,\,u)$,
\be\label{Euler}\left\{\begin{split}\pt \rho +\nc(\rho\vu)+{\nc\vu\over{ \ep}}&=0\\
\pt\vu+\vu\cn\vu+{p'(1+\ep\rho)\over1+\ep\rho}\,{\nabla \rho\over{ \ep}}&=0\end{split}\right.\ee\be\label{Euler:BC}u\ccdot\nm\bigr|_{\pa\Omega}=0\;\;,\qquad\int_\Omega\rho\,dx=0.\ee

\begin{theorem}\label{thm:main}Consider the initial-boundary value problem of the $\mD$-dimensional compressible Euler equations \eqref{Euler}, \eqref{Euler:BC} subject to initial data $(\rho_0,\,u_0)\in H^m(\Omega)$ with $m>{\mD\over2}+4$. Assume $(\rho_0,u_0)$ is compatible with the boundary condition $\mbox{``}\pt^ku_0\mbox{{''}}\ccdot\nm\bigr|_{\pa\Omega}=0$ for $k<m$. Here, $\mbox{``}\pt^ku_0\mbox{{''}}$ is obtained from expressing $\pt^ku$ in terms of spatial derivatives  via \eqref{Euler} and then substituting $(\rho_0,\,u_0)$ into the expression.

Then, there exist general constants $E,T,C $  only dependent on $m$, $\Omega$ and the pressure law $p(\ccdot)$  s.t.  for all $\ep\in(0,E/\|(\rho_0,\,u_0)\|_{H^m}]$,  the solution $(\rho,u)$ exists smoothly for times $t\in[0,T/\|(\rho_0,\,u_0)\|_{H^m}]$ and $\|(\rho,\,u)\|_{H^m}\le C\|(\rho_0,\,u_0)\|_{H^m}$ uniformly for such times. 

Moreover, there exists a splitting of the velocity field $u=u^P+u^Q$ with $\nc u^P=0$ and $u^Q=\nabla\psi$ for some potential function $\psi$ s.t. for general constants $c_1,c_2$ only dependent on $m,\Omega,p(\ccdot)$ and $\|(\rho_0,\,u_0)\|_{H^m}$,
\begin{itemize}\item (strong convergence of the slow compoenent ) 
\[\max_{0\le t\le T}\|u^P(t,\ccdot)-\vv(t,\ccdot)\|_{H^{m-3}(\Omega)}\le c_1\ep \]where $\vv$ solves the incompressible Euler equations 
\be\label{Euler:inc}\left\{\begin{split}\pt \vv+\vv\cn \vv+\nabla q&=0,\\\nc \vv&=0,\end{split}\right.\ee \[\vv\ccdot\nm\bc=0,\quad \vv(0,\ccdot)=u_0^P \quad;\]
 and,
\item (weak* convergence of the acoustic part) for any smooth testing function $(\rho',\,u')$ defined on $[0,T]\times\overline{\Omega}$,
\be\label{weak:s}\left|\int_0^T\!\!\int_\Omega\rho\rho'+u^Q\ccdot u'\,dxdt\right|
\le c_2{\ep}(\|(\rho',\,u')\|_{L^\infty_tL^2_x}+\|\pt(\rho',\,u')\|_{L^2_{t,x}}).\ee
 \end{itemize}\end{theorem}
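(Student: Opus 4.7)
The plan is to execute three intertwined pieces matching the three claims of Theorem~\ref{thm:main}: (i) a uniform-in-$\ep$ $H^m$ estimate on $(\rho,u)$ by a symmetric-hyperbolic energy method; (ii) the weak-$*$ bound \eqref{weak:s}, which I would prove by exhibiting both $\rho$ and $u^Q$ as $\ep\pt$ of bounded potentials; and (iii) the strong estimate on $u^P-\vv$, which I plan to derive from a vorticity equation that eliminates the singular force, with the identity from (ii) invoked to handle $O(1)$ cross terms by time-averaging.

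For (i), I would recast \eqref{Euler} as a symmetric hyperbolic system
\[A_0(\ep\rho)\pt W+\sum_j A_j(\ep\rho,u)\pa_j W+\tfrac{1}{\ep}L W=0,\quad W=(\rho,u)^\top,\]
with $L$ a constant-coefficient skew-adjoint operator on $\{v\ccdot\nm|_{\pa\Omega}=0\}$. Energy estimates on $\pa^\alpha W$ with $|\alpha|\le m$, weighted by $A_0$, kill the singular term via skew-adjointness together with the boundary condition. Pure normal derivatives at $\pa\Omega$ are avoided by differentiating first in time and converting each $\pt^k$ back to a spatial operator through the PDE — legitimate precisely because of the compatibility hypothesis on $(\rho_0,u_0)$. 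Standard Moser commutators then close the loop on an interval of length $T/\|(\rho_0,u_0)\|_{H^m}$.

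For (ii), the singular pressure force is an exact gradient: $\frac{p'(1+\ep\rho)}{1+\ep\rho}\na\rho/\ep=\na\rrr/\ep$ with $\rrr:=H(\ep\rho)/\ep$ and $H'(s)=p'(1+s)/(1+s)$. Introducing the Leray projections $P,Q=I-P$ and the mean-zero potential $\psi$ with $\na\psi=u^Q$ and Neumann BC (available since $u^Q\ccdot\nm|_{\pa\Omega}=0$), I would apply $Q$ to the momentum equation and combine it with the continuity equation to get, after choosing all potentials mean-zero,
\begin{align*}
\ep\pt\psi&=-\rrr-\ep\Phi_1,\\
\ep\pt(\Delta^{-1}\rho)&=-\psi-\ep\Phi_2,
\end{align*}
with $\Phi_1,\Phi_2$ bounded in $H^{m-1}$ by step (i) and $\Delta^{-1}$ the Neumann inverse on mean-zero data. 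Since $\rrr=\rho+O(\ep)$ uniformly, this yields $\rho=-\ep\pt\psi+\ep g$ and $u^Q=-\ep\pt\na(\Delta^{-1}\rho)+\ep\na g'$ with $g,g'$ bounded in $H^{m-1}$. Substituting into the double integral in \eqref{weak:s} and integrating by parts in $t$ transfers $\pt$ onto $(\rho',u')$, leaving only $t=0,T$ endpoints; these produce exactly the two terms on the right of \eqref{weak:s}.

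For (iii), taking $\curl$ of the momentum equation eliminates the singular (gradient) force entirely, so $\omega:=\curl u=\curl u^P$ solves a transport-type equation and is uniformly bounded in $H^{m-1}$. Subtracting the vorticity equation for $\bar\omega:=\curl\vv$, I would estimate $\tilde\omega:=\omega-\bar\omega$ in $H^{m-3}$. The obstructive contributions are cross terms of the form $\int_0^T\!\!\int_\Omega(u^Q\cn)\bar\omega\cdot\tilde\omega\,dx\,dt$ (and analogs from vortex stretching when $\mD=3$), in which $u^Q$ is only $O(1)$ pointwise. Step (ii) is invoked precisely here: substituting $u^Q=-\ep\pt\na(\Delta^{-1}\rho)+\ep\na g'$ and integrating by parts in $t$ extracts an explicit $\ep$ at the cost of $\pt\tilde\omega$, which the vorticity equation itself expresses in terms of uniform quantities. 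A Gronwall argument then gives $\|\tilde\omega\|_{H^{m-3}}\lesssim\ep$. Reconstructing $u^P-\vv$ from $\tilde\omega$ in a possibly multiply connected $\Omega$ requires an additional $O(\ep)$ bound on the harmonic component, which I would obtain from Kelvin's conservation of circulation around a basis of homology cycles (valid for $u$ precisely because its singular force is a gradient) and for $\vv$. The principal difficulty will be this time-averaging step: showing that the nonlinear products of $O(1)$ acoustic oscillations with slow-mode quantities integrate to $O(\ep)$ requires careful commutator control of $P,Q$ through the nonlinearity together with careful handling of the Neumann inversions near $\pa\Omega$.
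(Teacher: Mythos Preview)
Your plans for (ii) and (iii) are essentially correct and close to the paper's. For (ii) the paper dualizes---writing the test function as $\cL U''$ and moving $\cL$ onto the solution by skew-adjointness (Corollary~\ref{thm:weak})---rather than writing the acoustic part as $\ep\,\pt(\text{potential})$; the two are equivalent. For (iii) the paper works with the $\cP$-projected momentum equation instead of the vorticity equation. The fast-fast cancellation you get for free by taking $\curl$ appears there as a separate observation (Lemma~\ref{ff:lemma}: $u^Q\cn u^Q=\tfrac12\nabla|\nabla\psi|^2$ is an exact gradient), and the fast-slow time-averaging (Lemma~\ref{sf:lemma}) is used to extract an explicit $O(\ep)$ corrector \emph{before} the energy step, so the final Gronwall has pointwise-$O(\ep)$ forcing rather than merely time-averaged forcing; this is cleaner than doing the averaging inside the energy inequality as you propose. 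Your Kelvin-circulation recovery of the harmonic component is also more delicate than stated, since Kelvin concerns material curves, which differ under the flows of $u$ and $\vv$; the paper instead runs a direct $L^2$ estimate on the difference and combines it with the curl bound via \eqref{elliptic:bc}.

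There is, however, a real gap in (i). ``Converting each $\pt^k$ back to a spatial operator through the PDE'' recovers normal derivatives only of the components in the range of the characteristic boundary matrix, namely $\rho$ and $u\ccdot\nm$; the normal derivative of the \emph{tangential} velocity lies in the kernel and cannot be obtained this way. This is precisely the characteristic-boundary obstruction for the solid-wall problem. The paper closes it by bringing in the vorticity equation---which carries neither a $1/\ep$ term nor a boundary condition---to estimate $\nt u$ in the full mixed norm, and then invoking the div-curl elliptic estimate \eqref{elliptic:bc} to upgrade $\|\nc u\|_{H^{m-1}}+\|\nt u\|_{H^{m-1}}+\|u\|_{L^2}$ to $\|u\|_{H^m}$; see Theorem~\ref{thm:linear} (where the vorticity structure is an explicit hypothesis) and Lemmas~\ref{lemma:K:pt}--\ref{lemma:L}. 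Without this ingredient your step (i) will not produce an $\ep$-uniform $H^m$ bound for general data.
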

\begin{proof}
For strong convergence, see Theorem \ref{thm:strong}. Some major new ideas of this article are centered around this theorem. For weak* convergence, see Corollary \ref{thm:weak}.

For uniform existence time and $H^m$ estimates, see Theorem \ref{thm:uniform}  and the remarks thereafter. Note that the mere existence of $H^m$ solution is not the concern of this article. Thus, the compatibility condition serves an assumption  in this theorem and will not appear explicitly again.
\end{proof}
There have been numerous results regarding the {\it singular limits} of compressible Euler equations and other
fluid equations  in various settings. We point to two survey papers for some comprehensive lists of references: Schochet \cite{Sch:survey} with emphases on hyperbolic PDEs and homogenization in space-time; Masmoudi \cite{Mas:survey} with emphases on viscous fluids and weak solutions. To mention only a few earliest work, we note papers by Ebin \cite{Ebin:earlier, Ebin:Euler}, Kreiss  \cite{Kreiss:multiscale}, Beir\~{a}o de Veiga  \cite{BDV}, and Klainerman and Majda \cite{Kl:Maj}.  In a closely related paper \cite{Kreiss:derivative}, Kreiss etc. applied the bounded derivative method in numerical schemes to gain control on time derivatives and thus to rid of fast gravity waves. These results, in terms of \eqref{Euler},  confirmed that compressible flows $(\rho,\,u)$ converges to $( 0,\, \vv)$ strongly at order $O(\ep)$ with $\vv$ solving \eqref{Euler:inc} \emph{provided} the initial data $(\rho_0,\,u_0)$ also converges to $(0,\,\vv_0)$ strongly at order $O(\ep)$\be\label{well_prep}\|\rho_0\|+\|u_0-\vv_0\|\lesssim \ep\mmbox{ for some div-free $\vv_0$.}\ee
Here, $\|\cdot\|$ denotes suitable {\it spatial norm}. Note that  condition \eqref{well_prep} implies that perturbation in the total density $\hat{\rho}$ vanishes at  order $O(\ep^2)$ --- consult \eqref{def:rho}.

  This family of well-prepared  initial data lead to uniform estimates on the size of $\pt(\rho,\,  u)$ at $t=0$, independent of $\ep$, by the virtue of \eqref{Euler} and therefore suppresses the so-called initial layer. Then, one obtains uniform control on the size of $\pt(\rho,\,u)$ for finite times, which allows passing of limits by the Arzel\`{a}-Ascoli Theorem. 
Well-prepared conditions on initial data were later removed for problems in the whole space (Ukai \cite{Ukai:Rn}), in an exterior domain (Isozaki \cite{Isozaki:exterior,Isozaki:wave}) and in a torus (Schochet \cite{Sch:limits}). These arguments more or less rely on use of Fourier analysis and/or dispersive nature of the underlying wave equations. 

Singular limit problems in a bounded spatial domain, on the other hand, remain much less studied. Schochet proved in \cite{Sch:Euler} the same low-Mach-number limit with solid-wall boundary condition and, again, well-prepared initial data.  A main challenge in this setting is  the presence of characteristic boundary. It is elaborated in Rauch's work \cite{Rauch:char} for linear systems that, in general, only estimates along tangential directions  are available near the boundary. We also note that there were also preceding results  in e.g. \cite{Ebin:Euler}, \cite{BDV}, all of which required well-prepared initial data. 

The main originality of our paper is to prove the low-Mach-number limit for \emph{general initial data} without requiring the well-prepared condition \eqref{well_prep}. We establish a strong convergence rate of $O(\ep)$ for the slow incompressible part and weak* convergence for the fast acoustic part. In lieu of the commonly accepted initial layer, it is proved that nonlinear resonance of fast acoustic waves does not enter the slow dynamics at all (Lemma \ref{ff:lemma}) and interaction of fast-slow dynamics vanish at order $O(\ep)$ upon integrating in time (Lemma \ref{sf:lemma}). This argument was explicitly used in Cheng \cite{BC:HYP08} to study the rapidly rotating shallow water equations with 2 fast scales in the whole space. It recently came to our knowledge that similar ideas have occasionally appeared in the literature for weak limits (cf. equations after (7) in Lions and Masmoudi \cite{Lions:Mas:CRAS}) and for problems without boundary (cf. equation (4.27), (4.28) in Schochet \cite{Sch:survey}). We also tackle the boundary condition carefully and present a clear calculation of \emph{a priori} estimates on space-time norms of the solution. Here, the vorticity equation plays a crucial role, which was  argued in e.g. Schochet \cite{Sch:general}. Throughout our analysis (not just \emph{a priori} estimates), we employ elliptic estimates for PDE systems with general boundary conditions (Agmon etc. \cite{elliptic}).

The organization of the rest of this article is as follows. In Section \ref{sec:ell}, we introduce elliptic estimates particularly for the case at hand and present a precise characterization for the projection operators associated with Helmholtz decomposition. This projection is to be used to split the solution as well as the system into ``slow'' and ``fast'' parts. Moving on to Section \ref{sec:nonlinear}, we conduct a thorough study on nonlinear interactions of fast-fast and fast-slow types, which will confirm the assertions in the previous paragraph. Solution regularity is presumed here and is then treated rigorously in the next Section \ref{sec:a:priori} regarding \emph{a priori} estimates. A mollification method is used to handle the lack of boundary regularity. The proofs are self-contained, only relying on basic Calculus. Finally,  Section \ref{sec:RSW} contains a crash course on the rapidly Rotating Shallow Water equations to which the exact same ideas apply. It is presented in a very systematic way due to an algebraic structure of duality nature.

We will repeatedly use some well-known inequalities of Sobolev norms without making references. They are all based on H\"{o}lder's inequality,  Gargliardo-Nirenberg inequality and Sobolev inequality. For the most part, it is sufficient to accept the following estimates,
\be\label{cal:ineq}\|\pa^{j_1}_xg\pa^{j_2}_xg...\pa^{j_k}_xg\|_{L^2(\Omega)}\le c\|g\|^k_{H^m(\Omega)}\ee
where $m>\mD/2+1$, $0\le j_1\le...\le j_k\le m$ and $j_1+...+j_k\le m+1$.
 
\section{Elliptic Estimates and Helmholtz Decomposition}\label{sec:ell}
Elliptic estimates and Helmholtz decomposition are both from Elliptic PDE theories, the former regarding regularity and the latter solvability. In connection with the compressible Euler equations, the singular terms in \eqref{Euler} with fast scale $1/\ep$ define an elliptic operator,
\[\cL \bpm\rho\\u\epm:=\bpm\nc u\\\nabla\rho\epm,\]
and there is another elliptic operator\[
\cK \bpm\rho\\u\epm:=\left({0\atop\nt u}\right)\mmbox{in 3D}
\mmbox{and} \bpm 0\\0\\\nt u\epm\mmbox{in 2D}\]
that essentially yields the vorticity. They satisfy,\[\cK\cL\equiv0\mmbox{ and, in 3D,}\cL\cK\equiv0,\]
which is why the singular $1/\ep$ term does not appear in the vorticity equation.

The papers of Agmon, Douglis and Nirenberg \cite{elliptic} establish a Complementing Boundary Condition that is necessary and sufficient for the solution operator of a $s$-th order elliptic PDE system to be $\cC^m\to\cC^{m+s}$ and $H^m\to H^{m+s}$. In this article,  only a particular case is used: for any velocity field $u$ with a trace subject to the solid-wall boundary condition $u\ccdot\nm\bc=0$,
\be\label{elliptic:bc}\|u\|_{H^m(\Omega)}\le C\left(\|\nc u\|_{H^{m-1}(\Omega)}+\|\nt u\|_{H^{m-1}(\Omega)}+\|u\|_{L^2(\Omega)}\right).\ee
Here and below, we always assume $m$ is a positive integer  so that the trace $u\bc$ is well-defined. See e.g. \cite{BB:Euler} for application of this estimate. 

\begin{remark} The $\|u\|_{L^2}$ term in the above estimate is removable if and only if the elliptic PDE system\be\label{nontrivial}\nt u=0,\quad
 \nc u= 0\quad u\ccdot\nm\bc=0,\ee
admit a unique trivial solution $u\equiv0$. This is the case e.g. if  $\Omega$ is contractible (in virtue of the Poincar\'{e} lemma) but is not true in general. For example, in 2D, when $\pa\Omega$ consists at least two disjoint branches $\Gamma_1$, $\Gamma_2$, one solves a scalar Laplace equation $\Delta\psi=0$ with $\psi\bigr|_{\Gamma_i}=a_i$ and $a_1\ne a_2$. Then, $u=\bpm\pa_y\psi\\-\pa_x\psi \epm$ is indeed a nontrivial solution to the above PDE system\footnote{With a compactness argument, one can further show that these solutions form a finite dimension space. Then, \eqref{elliptic:bc} can be expressed in quotient space with the $L^2$ norm removed from the RHS.}. We will revisit this remark two more times below.
\end{remark}

For the $\rho$ component of the solution, under the zero mean condition in \eqref{Euler:BC}, one has \(\|\rho\|_{L^2(\Omega)}\le C\|\nabla\rho\|_{L^2(\Omega)}\) by the Poincar\'{e} inequality and therefore \be\label{poincare:ineq}\|\rho\|_{H^m(\Omega)}\le C\|\nabla\rho\|_{H^{m-1}(\Omega)}.\ee

Now, define a solution space $\adm^m\subset H^m(\Omega)$ as\[\adm^m:=\left\{U=\bpm\rho\\ u\epm\in H^m(\Omega)\,\Bigr|\,u\ccdot\nm=0\mbox{ on }\pa\Omega\mbox{ and }\int_\Omega\rho\,dx=0\right\}.\]
Then, the above estimates \eqref{elliptic:bc}, \eqref{poincare:ineq} lead to \be\label{elliptic:LK}\|U\|_{H^m(\Omega)}\le C(\|\cL U\|_{H^{m-1}(\Omega)}+\|\cK U\|_{H^{m-1}(\Omega)}+\|u\|_{L^2(\Omega)})\mmbox{for}U\in\adm^m.\ee
 
Next, we move on to Helmholtz decomposition. Define $\cP$ as the $L^2$ projection onto the $L^2$ closure of $\kernel\cL\cap\adm^m$ and define $\cQ$ as its orthogonal complement,\be\label{def:proj}\cP:=L^2\!\!-\!\!\mbox{Proj}\{\overline{\kernel\cL\cap\adm^m}^{L^2}\},\qquad\cQ:={\mathcal I}-\cP.\ee
Note that $\kernel\cL\cap\adm^m=\left\{\bpm0\\u\epm\in H^m\,\Bigr|\,u\ccdot\nm\bc=0,\;\nc u=0\right\}$. 

We present two ways to characterize $\cP,\,\cQ$. The first way is to express them using an elliptic PDE,
\be\label{def:cQ}\mbox{for any }U=\bpm\rho\\ u\epm\in H^m,\;\;\cQ U=\bpm\rho\\\nabla\phi\epm\ee
$\mbox{ with }\phi\mbox{ solving }$\be\label{eq:phi}\left\{\begin{split}\Delta\phi&=\nc u,\\\nabla\phi\ccdot\nm\bigr|_{\pa\Omega}&=u\ccdot\nm\bigr|_{\pa\Omega}.\end{split}\right.\ee
\begin{proposition}Operator $\cQ$ given in \eqref{def:cQ}, \eqref{eq:phi} exists uniquely and amounts to ${\mathcal I}-\cP$ when restricted on $H^m(\Omega) $.
\end{proposition}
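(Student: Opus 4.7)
The plan is to verify three things that together force $\cQ = {\mathcal I}-\cP$ on $H^m$: (i) $\cQ U$ is well-defined and lies in $H^m(\Omega)$; (ii) $U-\cQ U \in \kernel\cL \cap \adm^m$; and (iii) $\cQ U$ is $L^2$-orthogonal to $\kernel\cL\cap\adm^m$. Since $\overline{\kernel\cL\cap\adm^m}^{L^2}$ and its orthogonal complement give a unique $L^2$-decomposition of $U$, items (ii) and (iii) identify $U-\cQ U$ with $\cP U$, yielding both the claimed equality and uniqueness of $\cQ$.

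For (i), the Neumann problem \eqref{eq:phi} is compatible since $\int_\Omega \nc u\,dx = \int_{\pa\Omega} u\ccdot\nm\,dS$ by the divergence theorem, so $\phi$ exists and is unique modulo an additive constant, whence $\nabla\phi$ is uniquely determined. Standard elliptic regularity for the Neumann Laplacian then promotes the data $\nc u \in H^{m-1}(\Omega)$ and $u\ccdot\nm \in H^{m-1/2}(\pa\Omega)$ to $\nabla\phi \in H^m(\Omega)$, giving $\cQ U \in H^m(\Omega)$.

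For (ii), we compute $U - \cQ U = \bpm 0 \\ u - \nabla\phi \epm$: its divergence vanishes by the PDE in \eqref{eq:phi}, its normal trace on $\pa\Omega$ vanishes by the boundary condition in \eqref{eq:phi}, and the scalar component is identically zero (hence has mean zero). Thus $U-\cQ U \in \kernel\cL\cap\adm^m$. For (iii), recall from the paragraph just before \eqref{def:proj} that an arbitrary $V\in\kernel\cL\cap\adm^m$ has the form $V=\bpm 0 \\ v \epm$ with $\nc v=0$ in $\Omega$ and $v\ccdot\nm\bc=0$. Integration by parts then gives
\[\langle \cQ U,\, V\rangle_{L^2} = \int_\Omega \nabla\phi\ccdot v\,dx = \int_{\pa\Omega}\phi\,(v\ccdot\nm)\,dS - \int_\Omega \phi\,(\nc v)\,dx = 0,\]
and this identity passes to the $L^2$-closure by continuity of the inner product.

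I do not expect a serious obstacle in executing this plan. The one step requiring care is the integration by parts in (iii), where the solid-wall condition $v\ccdot\nm=0$ is essential for killing the boundary term; dually, it is the inhomogeneous Neumann condition in \eqref{eq:phi} that makes the normal trace of $u-\nabla\phi$ vanish in (ii). The topological subtleties flagged in the remark after \eqref{nontrivial} do not interfere here: any harmonic field arising from non-trivial cohomology already lies in $\kernel\cL\cap\adm^m$ and is therefore absorbed into $\cP U$, while $\cQ U$ depends only on $\nabla\phi$ and is unaffected.
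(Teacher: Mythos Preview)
Your proposal is correct and follows essentially the same route as the paper: verify that $U-\cQ U\in\kernel\cL\cap\adm^m$ and that $\cQ U\perp\kernel\cL\cap\adm^m$ by integrating by parts against a divergence-free, tangential test field. The only cosmetic difference is in the well-posedness step for \eqref{eq:phi}: the paper first lifts the inhomogeneous Neumann datum explicitly via $\phi_1(x)=\chi(\mathrm{dist}(x,\pa\Omega))\,u(x)\cdot\nabla\mathrm{dist}(x,\pa\Omega)$ and then applies the Fredholm alternative to the homogeneous problem, whereas you invoke the compatibility condition and standard Neumann regularity directly---both arguments yield the same $H^{m+1}$ solution up to constants.
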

\begin{proof}It is sufficient to study the well-posedness of \eqref{eq:phi}. Define a smooth function \[\phi_1(x):=\chi\bigr(\mbox{dist}(x,\,\pa\Omega)\bigr)u(x)\cn\mbox{dist}(x,\,\pa\Omega)\mmbox{ for }x\in\overline{\Omega}\]where $\chi(\ccdot)\in\cC^\infty$ s.t. with a suitable constant $c$, $\chi(z)=1$ if $|z|\le c$ and $\chi(z)=0$ if $|z|>2c$.  It is easy to verify that $\nabla\phi_1\ccdot\nm\bc=u\ccdot\nm\bc$ since $\nabla\mbox{dist}(x,\,\pa\Omega)\bc=-\nm$. Then, one uses the Fredholm alternative to show that \[\left\{\begin{split}\Delta\phi_2&=\nc u-\Delta\phi_1\\\nabla\phi_2\ccdot\nm\bigr|_{\pa\Omega}&=0\end{split}\right.\]admits a $H^{m+1}$ solution $\phi_2$ that is unique module a constant. In fact, this is done by showing $\displaystyle\int_\Omega(\nc u-\Delta\phi_1)\,dx=\int_{\pa\Omega}(u\ccdot\nm-\nabla\phi_1\ccdot\nm)\,ds=0$. Therefore, $\nabla\phi=\nabla(\phi_1+\phi_2)$ in \eqref{def:cQ}, \eqref{eq:phi} exists uniquely in the space of $H^m$.

Next, we verify $\cQ=\cI-\cP$.     By definition,\[\cL(\cI-\cQ) U =\cL \bpm 0\\ u-\nabla\phi\epm =\bpm 0\\ \nc u-\Delta\phi\epm=0,\]\[\mmbox{and}\nabla(u-\nabla\phi)\ccdot\nm\bigr|_{\pa\Omega}=0 .\]Therefore, \[(\cI-\cQ)U\in\kernel\cL\cap\adm^m.\] It remains to show $\cQ U$ is orthogonal to  $\image\cP$. Take a testing function $U'=\bpm\rho'\\ u'\epm\in\kernel \cL\cap\adm^m$ that satisfies $\rho'=0$ and $ \nc u'=0$ with $u'\ccdot\nm\bigr|_{\pa\Omega}=0$. Combining this information, we calculate\[\int_\Omega U'\ccdot\cQ U\,dx=\int_\Omega\rho'\rho+u'\ccdot\nabla\phi\,dx=\int_\Omega\nc(u'\phi)\,dx=\int_{\pa\Omega}\nm\ccdot u'\phi\,ds=0.\]Thus, $\cQ U$ is orthogonal to  ${\kernel\cL\cap\adm^m}$ and therefore its $L^2$ closure.\end{proof}

Now that $\cP$, $\cQ$ are well-defined, for simplicity, we will use $U^P$ for $\cP U$ and $U^Q$ for $\cQ U$ whenever it is not ambiguous. We will also use lower case $u^P$, $u^Q$ for the associated velocity components. The density component of $\cP U$ is always zero.

The second way of characterizing $\cP$ essentially relies on the skew-self-adjointness of $\cL$ and will be very useful in studying nonlinear resonance of acoustic waves.

\begin{proposition}\label{prop:dual}In addition to the definition of $\cP$,   a duality relation also holds true, \be\label{kernel:cP}\kernel\cP\Bigr|_{H^m} =\image\cL\Bigr|_{\adm^{m+1}}.\ee
\begin{proof}Since $\cP$ is a projection, its kernel identifies with $\image(\cI-\cP)$. So it suffices to show\[\image\cQ\Bigr|_{H^m}=\image\cL\Bigr|_{\adm^{m+1}}\]

 Take any $U^Q=\bpm\rho\\u^Q\epm\in H^m$. By \eqref{def:cQ}, \eqref{eq:phi},  $u^Q=\nabla\phi$ for some $\phi\in H^{m+1}$ with $\int_\Omega\phi=0$. Also,  since $\int_\Omega\rho=0$, the Poisson's equation $\Delta \psi=\rho$ with $\nabla\psi\ccdot\nm\bc=0$  admits (at least) a solution in $H^{m+2}$. Thus, we set $U'=\bpm\phi\\\nabla\psi\epm\in \adm^{m+1}$ and clearly $U^Q=\cL U'$. Therefore, LHS\eqref{kernel:cP}$\subset$RHS\eqref{kernel:cP}.

Assume $U=\bpm\rho\\u\epm=\cL U'$ for some $U'=\bpm\rho'\\u'\epm\in \adm^{m+1}$. Since $u=\nabla\rho'$, by \eqref{def:cQ}, \eqref{eq:phi} and its unique solvability, we have $u^Q=u$. Thus, $U=U^Q\in\image\cQ$. Therefore, RHS\eqref{kernel:cP}$\subset$LHS\eqref{kernel:cP}.
\end{proof}
\end{proposition}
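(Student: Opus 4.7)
The plan is to exploit the fact that $\cP$ is an orthogonal projection, so $\kernel\cP=\image(\cI-\cP)=\image\cQ$ on $H^m$, and thereby reduce the claim to the equality $\image\cQ|_{H^m}=\image\cL|_{\adm^{m+1}}$. This is natural because $\cQ$ was defined through an elliptic problem whose solution inherits one extra derivative, exactly matching the one-derivative drop from $\adm^{m+1}$ to $H^m$ caused by $\cL$. I will prove the two inclusions separately; both are essentially constructive.

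For the inclusion $\image\cL|_{\adm^{m+1}}\subset\image\cQ|_{H^m}$, I take $U=\cL U'$ with $U'=\bpm\rho'\\u'\epm\in\adm^{m+1}$, so that $U=\bpm\nc u'\\\nabla\rho'\epm\in H^m$, and verify directly from \eqref{def:cQ}--\eqref{eq:phi} that $\cQ U=U$. The candidate potential is simply $\phi=\rho'$: it satisfies $\Delta\phi=\Delta\rho'=\nc(\nabla\rho')$, matching the interior equation with right-hand side built from the velocity component of $U$, and $\nabla\phi\ccdot\nm=\nabla\rho'\ccdot\nm$ which matches the Neumann data. Uniqueness (up to a harmless constant) of $\phi$ in \eqref{eq:phi}, which was established in the preceding proposition, then forces $\cQ U=U$, hence $U\in\image\cQ$.

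For the reverse inclusion, I start with an arbitrary $U^Q=\bpm\rho\\u^Q\epm\in\image\cQ\subset H^m$ and build an explicit preimage in $\adm^{m+1}$. By construction of $\cQ$, $u^Q=\nabla\phi$ for some $\phi\in H^{m+1}$, which I may normalize to have zero mean. Since $U^Q\in\adm^m$ we also have $\int_\Omega\rho\,dx=0$, so the Neumann problem $\Delta\psi=\rho,\ \nabla\psi\ccdot\nm|_{\pa\Omega}=0$ admits a solution $\psi\in H^{m+2}$ by the Fredholm alternative together with standard elliptic regularity. Setting $U'=\bpm\phi\\\nabla\psi\epm$, the solid-wall boundary condition for the second component is automatic from the Neumann condition on $\psi$, so $U'\in\adm^{m+1}$, and $\cL U'=\bpm\Delta\psi\\\nabla\phi\epm=\bpm\rho\\u^Q\epm=U^Q$, as required.

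The only genuinely delicate point is the regularity bookkeeping in the second inclusion: I must produce a preimage living one derivative higher than the datum, which depends on $H^{m+1}$-regularity of $\phi$ (already guaranteed by the construction of $\cQ$) and on $H^{m+2}$-regularity for the Neumann potential $\psi$ (which requires the smoothness of $\pa\Omega$ and the compatibility condition $\int_\Omega\rho\,dx=0$). The first inclusion is then essentially algebraic, relying on the uniqueness clause of \eqref{eq:phi}. Aside from this, the argument does not need any new tool beyond what was introduced in the construction of $\cP,\cQ$.
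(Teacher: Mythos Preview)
Your proposal is correct and follows essentially the same approach as the paper: both reduce to $\image\cQ|_{H^m}=\image\cL|_{\adm^{m+1}}$, prove one inclusion by constructing the preimage $U'=\bpm\phi\\\nabla\psi\epm$ via the Neumann problem $\Delta\psi=\rho$, and prove the other by observing that $u=\nabla\rho'$ already solves \eqref{eq:phi} so uniqueness gives $\cQ U=U$. The only cosmetic difference is the order in which you treat the two inclusions.
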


\begin{remark}The above duality relation reminds us of Linear Algebra if $\cL$, $\cK$ were (skew)-symmetric matrices and $\cP$, $\cQ$ were matrices representing orthogonal projection onto their null spaces. In fact, if $\Omega$ is contractible, one can use the Poincar\'{e} Lemma to show that $\kernel\cP=\image\cL=\kernel \cK$ in proper regularity spaces. In general, however, only  $ \kernel\cP=\image\cL\subset\kernel\cK$ is true. Take for example $u$ given in \eqref{nontrivial}. It is a nonzero element of both $\kernel\cK$ and $\kernel\cL=\image\cP$ and therefore not of $\kernel\cP$.

On the other hand, regardless of domain topology, both $\image\cL$ and $\kernel\cK$ are invariant spaces of the barotropic compressible Euler equations. In other words, if the initial velocity is a potential flow (resp. curl-free), then it stays so at later times for as long as the classical solution exists. \end{remark}

Now, we claim that $\cP$, $\cQ$ are both bounded operators in $H^m(\Omega)$. Indeed, by definition \(\cL\cP=0\) and thus $\cL\cQ U=\cL U$. Also, by Proposition \ref{prop:dual}, $\image\cQ=\image\cL\subset\kernel\cK$ and thus $\cK\cQ U=0$. Then, elliptic estimate \eqref{elliptic:LK} implies
\be\label{est:cQ:L2}\|\cQ U\|_{H^m}\le C(\|\cL U\|_{H^{m-1}}+\|\cQ U\|_{L^2})\le C\|U\|_{H^m}\ee\be\label{est:cP}\mmbox{ and likewise }\|\cP U\|_{H^{m}}\le C\|U\|_{H^m}.\ee

Even more can be said about $\cQ$.
\begin{proposition}\label{prop:cQ}For any $U^Q\in\image\cQ\cap\adm^m$ ($m\ge1$),\[\|U^Q\|_{H^m}\le C\|\cL U^Q\|_{H^{m-1}}.\]\end{proposition}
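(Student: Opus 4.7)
The plan is to apply the elliptic estimate \eqref{elliptic:LK} to $U^Q$ and then remove the residual $L^2$ term using the extra rigidity that $U^Q \in \image\cQ$ provides. By Proposition \ref{prop:dual}, $\image\cQ = \image\cL \subset \kernel\cK$, so $\cK U^Q = 0$. Since $U^Q \in \adm^m$, estimate \eqref{elliptic:LK} reduces to
\[
\|U^Q\|_{H^m} \le C\bigl(\|\cL U^Q\|_{H^{m-1}} + \|u^Q\|_{L^2}\bigr),
\]
so everything comes down to absorbing $\|u^Q\|_{L^2}$ into $\|\cL U^Q\|_{H^{m-1}}$.

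To do so, I would use that $U^Q = (\rho^Q, u^Q)^T \in \image\cQ$ means $u^Q = \nabla\phi$ with $\phi \in H^{m+1}$ satisfying the Neumann condition $\nabla\phi\ccdot\nm\bc = u^Q\ccdot\nm\bc = 0$ (from $U^Q \in \adm^m$). Normalize $\phi$ to have zero mean; this is legitimate because $u^Q$ determines $\phi$ only up to an additive constant. Then an integration by parts gives
\[
\|u^Q\|_{L^2}^2 = \int_\Omega |\nabla\phi|^2\,dx = -\int_\Omega \phi\,\Delta\phi\,dx + \int_{\pa\Omega}\phi\,\nabla\phi\ccdot\nm\,ds = -\int_\Omega \phi\,\nc u^Q\,dx,
\]
where the boundary term vanishes. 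By Cauchy--Schwarz and the Poincar\'{e} inequality (available since $\phi$ has zero mean),
\[
\|u^Q\|_{L^2}^2 \le \|\phi\|_{L^2}\|\nc u^Q\|_{L^2} \le C\|\nabla\phi\|_{L^2}\|\nc u^Q\|_{L^2} = C\|u^Q\|_{L^2}\|\nc u^Q\|_{L^2}.
\]
Dividing yields $\|u^Q\|_{L^2} \le C\|\nc u^Q\|_{L^2} \le C\|\cL U^Q\|_{H^{m-1}}$, and the conclusion follows.

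The only real obstacle is the $\|u^Q\|_{L^2}$ term, which as the earlier remark stresses is genuinely present in \eqref{elliptic:LK} on non-contractible domains; the nontrivial harmonic fields exhibited in \eqref{nontrivial} lie in $\kernel\cL \cap \kernel\cK$, so any bound of the form $\|U\|_{H^m}\lesssim\|\cL U\|_{H^{m-1}}$ fails on arbitrary $U\in\adm^m$. The content of the proposition is that restricting to $\image\cQ$ cuts out exactly those obstructions: an element of $\image\cQ$ is a pure gradient whose potential satisfies a homogeneous Neumann condition, which is precisely the ingredient needed to convert the elliptic estimate into the desired bound via the one-line integration by parts above.
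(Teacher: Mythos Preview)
Your proof is correct and follows essentially the same route as the paper: reduce via the elliptic estimate and $\cK U^Q=0$ to controlling $\|u^Q\|_{L^2}$, then exploit $u^Q=\nabla\phi$ with zero-mean $\phi$ and the homogeneous Neumann condition to integrate by parts, apply Cauchy--Schwarz and Poincar\'{e}, and cancel one factor of $\|u^Q\|_{L^2}$. The only cosmetic difference is that the paper writes the integration by parts as $\int u^Q\cdot\nabla\phi=\int(\nc u^Q)\phi$ directly rather than passing through $-\int\phi\Delta\phi$, but the content is identical.
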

\begin{proof}Inequality \eqref{est:cQ:L2} being established, it remains to estimate $\|\cQ U\|_{L^2}$.

By Proposition \ref{prop:dual}, $u^Q=\nabla\phi$ (with $\phi$ chosen to have zero mean). With the assumption $u^Q\ccdot\nm\bc=0$, it follows
\[\begin{split}\int_\Omega |u^Q|^2=&\int_\Omega u^Q\cn\phi=\int_\Omega\nc u^Q\phi\\\le&\|\nc u^Q\|_{L^2}\|\phi\|_{L^2}\le C\|\nc u^Q\|_{L^2}\|\nabla\phi\|_{L^2}.\end{split}\]
Since $u^Q=\nabla\phi$, we deduce $\|u^Q\|_{L^2}\le C\|\nabla\ccdot u^Q\|_{L^2}$ which obviously is bounded by $C\|\cL U^Q\|_{H^{m-1}}$.\end{proof}

A useful application of the above propositions is the weak* convergence result of Theorem \ref{thm:main}.
\begin{corollary}\label{thm:weak}
Let  $U=\bpm\rho\\u\epm$ solves the compressible Euler equations \eqref{Euler}, \eqref{Euler:BC} with sufficient regularity. Then, there exists  constant C depending on the size of $U$ s.t. for any   testing function $U'=\bpm\rho'\\u'\epm\in\cC^1([0,T]\times\overline{\Omega})$, \be\label{c:weak}\left|\int_0^T\!\!\int_\Omega U^Q\ccdot U'\,dxdt\right|\le C{\ep}(\|U'\|_{L^\infty_tL^2_x}+\|\pt U'\|_{L^2_{t,x}}).\ee
\end{corollary}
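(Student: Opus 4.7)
The plan is to use the Euler system itself to express $U^Q$ as $\ep$ times a pure time derivative plus $\ep$ times a bounded remainder, then transfer the time derivative onto the test function $U'$ via integration by parts. The variable coefficient $p'(1+\ep\rho)/(1+\ep\rho)$ expands as $1+\ep\rho\,g(\ep\rho)$ for some smooth $g$, so \eqref{Euler} takes the form $\pt U + \tfrac{1}{\ep}\cL U + \tilde N(U) = 0$ with $\tilde N(U) := \bpm\nc(\rho u) \\ u\cn u + \rho\,g(\ep\rho)\nabla\rho\epm$ quadratic in $U$ and $\|\tilde N(U)\|_{L^2_x}\le C\|U\|_{H^m}^2$ uniformly in $\ep$.

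Apply $\cQ$: since $\cL\cP=0$ we have $\cL\cQ=\cL$, and since $\image\cL\subseteq\image\cQ$ by Proposition \ref{prop:dual} we have $\cQ\cL=\cL$; hence $\pt U^Q + \tfrac{1}{\ep}\cL U^Q + \cQ\tilde N(U) = 0$. By Proposition \ref{prop:cQ}, $\cL$ is an isomorphism from $\image\cQ\cap\adm^m$ onto $\image\cQ\cap H^{m-1}$, with bounded inverse $\cL^{-1}$ satisfying in particular $\|\cL^{-1}V\|_{L^2}\le C\|V\|_{L^2}$ (take $m=1$). Since $\cL^{-1}$ is purely spatial and commutes with $\pt$, we obtain
\[U^Q = -\ep\,\pt(\cL^{-1}U^Q) - \ep\,\cL^{-1}\cQ\tilde N(U).\]

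Test against $U'$ over $[0,T]\times\Omega$ and integrate by parts in time on the first term:
\bals
\int_0^T\!\!\int_\Omega U^Q\ccdot U'\,dxdt ={}& \ep\int_0^T\!\!\int_\Omega \cL^{-1}U^Q\ccdot\pt U'\,dxdt - \ep\Bigl[\int_\Omega\cL^{-1}U^Q\ccdot U'\,dx\Bigr]_0^T\\
&- \ep\int_0^T\!\!\int_\Omega \cL^{-1}\cQ\tilde N(U)\ccdot U'\,dxdt.
\eals
Cauchy--Schwarz, combined with the uniform bound $\|U\|_{L^\infty_t H^m_x}\le C$ and the $L^2$-boundedness of $\cL^{-1}$ on $\image\cQ$, bounds the first term by $C\ep\|\pt U'\|_{L^2_{t,x}}$ and the remaining two by $C\ep\|U'\|_{L^\infty_tL^2_x}$, yielding \eqref{c:weak}.

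The substantive ingredient is Proposition \ref{prop:cQ}: the $L^2$-boundedness of $\cL^{-1}$ on $\image\cQ$ is what makes the $O(\ep)$ improvement visible. No explicit resonance analysis is required here because the ``time-averaging'' is delivered for free by integration by parts against $\pt U'$; the deeper resonance cancellations of Lemmas \ref{ff:lemma}--\ref{sf:lemma} will only be needed for the strong convergence of $u^P$.
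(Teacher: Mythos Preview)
Your argument is correct and is essentially the paper's own proof: both rely on Proposition~\ref{prop:cQ} to invert $\cL$ boundedly on $\image\cQ$, then use the Euler equation to produce an $\ep\,\pt(\cdot)$ term and integrate by parts in time. The only cosmetic difference is that you apply $\cL^{-1}$ to the solution $U^Q$, whereas the paper applies it to the test function (writing $U'=\cL U''$) and uses the skew-self-adjointness of $\cL$ to transfer it; the two are equivalent.
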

\begin{proof}Since we always have $\int_\Omega U^Q\ccdot\cP U'=0$, it is enough to consider $U'=\cQ U'$ . Then, by Proposition \ref{prop:dual}, there exists $U''\in\cC^1_{t,x}\cap\adm^2$ s.t. $U'=\cL U''$. Since $\cL U''=\cL\cQ U''$, choose $U''\in\image \cQ$ as well. By Proposition \ref{prop:cQ},
\be\label{est:U2}\|U''\|_{H^1_x}\le C\|U'\|_{L^2_x},\qquad\|\pt U''\|_{H^1_x}\le C\|\pt U'\|_{L^2_x}.\ee

Now, by the skew-self-adjointness of $\cL$ w.r.t. to the solid-wall boundary condition,
\[\begin{split}\int_0^T\!\!\int_\Omega U^Q\ccdot U'\,dxdt=&\int_0^T\!\!\int_\Omega \cL U^Q\ccdot U''\,dxdt=\int_0^T\!\!\int_\Omega \cL U\ccdot U''\,dxdt.\end{split}\]
Then, use the compressible Euler equations \eqref{Euler} to continue
\[\begin{split}...=&-\ep\int_0^T\!\!\int_\Omega (\pt U+\mbox{bounded terms}) \ccdot U''\,dxdt\\
=&-\ep\int_\Omega U\ccdot U''\,dx\Bigr|_0^T+\ep \int_0^T\!\!\int_\Omega\left( U\ccdot\pt U''+\mbox{bounded terms}\ccdot U''\right)\,dxdt.\end{split}\]

Plug in \eqref{est:U2} and the rest of the estimate follows.
\end{proof}

This weak* convergence result can be interpretated in terms of observations in practice. If one chooses $U'$ in \eqref{c:weak} as an integral kernel\[U'=\phi(t-t_0,x-x_0)\quad\mbox{ with }\quad\int\phi\,dxdt=1,\]
and uses it to represent some kind of physical measurement of the velocity and density fields in the fluid, then \eqref{c:weak} confirms that the presence of the fast, oscillatory component in the resulting measurement is of order $O(\ep)$. This is one of the reasons why (almost) incompressible fluid motions are often observed in practice.

 \section{Estimates on Nonlinear Interaction and Strong Convergence}\label{sec:nonlinear}
Rewrite \eqref{Euler} in terms of $U=\bpm\rho\\u\epm$,
\bse\be\label{Euler:sys}\pt U+\cN(U,\nabla U;\ep)=-{1\over\ep}\cL[U],\quad u\ccdot\nm\bc=0,\quad\int_\Omega\rho=0\ee
where the nonlinear term
\be\label{def:cN}\mbox{and }\quad \cN(U_1,\nabla U_2;\ep):=\bpm u_1\cn\rho_2+\rho_1\nc u_2\\ u_1\cn u_2+g(\rho_1;\ep)\nabla\rho_2\epm,\ee\be\label{def:g}\mbox{with }\quad g(\rho;\ep):=\left({p'(1+\ep\rho)\over1+\ep\rho}-1\right){1\over\ep}.\ee\ese
In this section, we always assume the solution to the above system exists and satisfies $U\in\adm^m$ for some $m>\mD/2+4$. It then follows that $\pt U\in\adm^{m-1}$, $\int U\,dt\in\adm^m$, etc.

\begin{theorem}\label{thm:strong}Consider the $\mD$-dimensional barotropic compressible Euler equations \eqref{Euler:sys}. Assume a solution exists classically $U(t,x)\in \cap_{j=0}^m\cC^j([0,T];H^{m-j}(\Omega))$  with $m>\mD/2+4$. Also assume a non-vacuum condition $\|\ep\rho\|_{\cC([0,T];H^{m})}\le1/2$. Then, there exists an incompressible flow $\vV=\bpm0\\\vv\epm$ s.t. for $t\in[0,T]$\[\|\cP\vU-\vV\|_{m-3}\le C\ep F( \max_t\|\vU\|_{m}),\]with projection $\cP$ defined in \eqref{def:proj} and $F(\ccdot)$ some polynomial.

In particular, $\vv$ can be chosen as the unique $H^m$ solution to the incompressible Euler equations
\be\label{inc:Euler}\left\{\begin{split}\pt \vv+\vv\cn \vv+\nabla \overline{q}&=0,\\\nc \vv&=0,\end{split}\right.\ee\[\vv\ccdot\nm\bc=0,\quad \vv(0,\ccdot)=u_0^P.\]
In other words, $\vV$ solves\be\label{inc:sys}\pt\vV+\cP \cN(\vV,\nabla\vV;0)=0.\ee
\end{theorem}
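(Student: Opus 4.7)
The plan is to project \eqref{Euler:sys} via $\cP$ and compare it against the incompressible limit \eqref{inc:sys}. By the duality \eqref{kernel:cP} of Proposition \ref{prop:dual}, $\cP\cL = 0$, so applying $\cP$ annihilates the singular $1/\ep$ term and yields a closed slow equation
$$\pa_t \vU^P + \cP\cN(\vU,\nabla \vU;\ep) = 0.$$
Writing $\vU = \vU^P + \vU^Q$ bilinearly splits the nonlinearity into slow-slow, mixed slow-fast, and fast-fast contributions. The $\ep$-dependence of $\cN$ is concentrated in $g(\rho;\ep)$ of \eqref{def:g}, and the non-vacuum hypothesis gives $|g(\rho;\ep)-g(\rho;0)|\le C\ep$, so up to a uniform $O(\ep)$ error I may replace $\cN(\cdot;\ep)$ by $\cN(\cdot;0)$ throughout.

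Two structural facts, which I expect to be the content of Lemmas \ref{ff:lemma} and \ref{sf:lemma}, then drive convergence. The first asserts $\cP\cN(\vU^Q,\nabla \vU^Q;0)\equiv 0$: since $u^Q = \nabla\phi$, this will reduce to a purely algebraic integration by parts combined with the characterization of $\image\cL$ in Proposition \ref{prop:dual}. The second asserts that the slow-fast piece $\cP[\cN(\vU^P,\nabla \vU^Q;0)+\cN(\vU^Q,\nabla \vU^P;0)]$ takes the form $\ep\,\pa_t R + O(\ep)$ with $R$ bilinear and bounded. The mechanism I would use is to rewrite $\cL\vU = \cL\vU^Q = -\ep(\pa_t\vU + \cN)$ from \eqref{Euler:sys} and invert using the bounded $\cL^{-1}$ on $\image\cQ$ supplied by Proposition \ref{prop:cQ}; this gives $\vU^Q = -\ep\,\cL^{-1}(\pa_t\vU + \cN)$, and substituting into the bilinear slow-fast terms and integrating by parts in time produces the claimed decomposition.

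Given both lemmas, I introduce the corrected difference $W := \vU^P - \vV - \ep R$. Subtracting \eqref{inc:sys} from the projected equation yields
$$\pa_t W + \cP\bigl[\cN(\vU^P,\nabla \vU^P;0) - \cN(\vV,\nabla \vV;0)\bigr] = O(\ep) \text{ in } H^{m-3}.$$
Standard product estimates of the form \eqref{cal:ineq}, combined with uniform $H^m$ control on $\vU$ and on the $H^m$ solution $\vV$ of \eqref{inc:Euler}, dominate the bracket by $C\|W\|_{H^{m-3}} + C\ep$. Since $\vV(0) = u_0^P = \vU^P(0)$, the initial defect is $W(0) = -\ep R(0) = O(\ep)$, and Gronwall's inequality then closes $\|W\|_{H^{m-3}} \le C\ep$ on $[0,T]$. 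Combined with $\|\ep R\|_{H^{m-3}} \le C\ep$, the triangle inequality delivers the claim.

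I expect the main obstacle to be Lemma \ref{sf:lemma}, the slow-fast averaging step: both the inversion of $\cL$ and the accompanying integration by parts must respect the characteristic solid-wall boundary, and this is exactly where Proposition \ref{prop:cQ} and the elliptic estimate \eqref{elliptic:LK} are indispensable. The same lemma is what forces the index drop from $H^m$ down to $H^{m-3}$, since the argument loses one derivative in $\cN$, a second when $\pa_t\vU$ is converted to spatial derivatives through \eqref{Euler:sys}, and a third in the product estimates used to absorb $R$.
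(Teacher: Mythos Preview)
Your plan is essentially the paper's own: project via $\cP$, kill the fast-fast term by Lemma~\ref{ff:lemma}, show the slow-fast term is a total time derivative of something $O(\ep)$ plus an $O(\ep)$ remainder (Lemma~\ref{sf:lemma}), absorb the corrector, and close by an energy argument. Your differential formulation of Lemma~\ref{sf:lemma} (writing the slow-fast term as $\ep\,\pa_t R+O(\ep)$) is equivalent to the paper's integral formulation; the paper instead bounds $\hat u^Q:=\int_0^t u^Q$ directly via Proposition~\ref{prop:cQ} and then integrates by parts in time. Either way works.

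The gap is in the closing step. Your claim that ``standard product estimates of the form \eqref{cal:ineq} \ldots\ dominate the bracket by $C\|W\|_{H^{m-3}}+C\ep$'' is not correct as a norm bound: the bracket contains the transport term $\vv\cn W$ (or $u^P\cn W$), and $\|\vv\cn W\|_{H^{m-3}}$ is controlled only by $\|W\|_{H^{m-2}}$, not $\|W\|_{H^{m-3}}$. This derivative loss cannot be repaired by \eqref{cal:ineq} alone, and a naive Gronwall fails. The paper confronts this explicitly: the equation for the corrected difference (their $e_1$, your $W$) is a symmetric hyperbolic system with dissipative solid-wall boundary, so the $L^2$ energy identity handles the transport term by integration by parts; higher norms are recovered not by differentiating the equation directly but by passing to the vorticity equation for $\nt e_1$ (which has no singular term and no boundary obstruction) and then invoking the elliptic estimate \eqref{elliptic:bc},
\[
\|e_1\|_{H^{m-3}}\le C\bigl(\|e_1\|_{L^2}+\|\nt e_1\|_{H^{m-4}}\bigr).
\]
You correctly flag \eqref{elliptic:LK} and Proposition~\ref{prop:cQ} as indispensable for Lemma~\ref{sf:lemma}, but the same elliptic machinery and the vorticity structure are equally indispensable in the final energy step, and your sketch omits them. (A minor point: with your sign convention for the decomposition of the slow-fast term, the corrector should enter $W$ with the opposite sign, i.e.\ $W=\vU^P-\vV+\ep R$, so that $\pa_t W$ absorbs $\ep\,\pa_t R$.)
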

\begin{remark}Here, we avoid using the vorticity formulation for the incompressible Euler equations since vorticity and divergence alone do not necessarily determine a unique velocity field. Consult the example in \eqref{nontrivial}.
\end{remark}

The key to proving this theorem is contained in two lemmas. They have been used in \cite{BC:HYP08} in the torus domain for the rotating shallow water equations. We now migrate them to bounded domains using the properly defined projection operators $\cP$, $\cQ$.

Apply   $\cP$ to \eqref{Euler:sys}. By Proposition \ref{prop:dual}, $\cP\cL=0$. Thus,
\begin{align*} 
\pt\vUn+ \cP \cN(\vU,\nabla\vU;\ep)&=0.
\end{align*}Although $\cN$ defined in \eqref{def:cN} is not entirely bilinear, we take advantage of the facts $p'(1)=1$ and  $\rho^P=0$ to derive \be\label{vUn:sys}\begin{split}
\pt\vUn+ &\cP \cN(\vU^P,\nabla\vU^P;0)+\cP \cN(\vU^Q,\nabla\vU^Q;\ep)\\
+&\cP \cN(\vU^Q,\nabla\vU^P;0)+\cP \cN(\vU^P,\nabla\vU^Q;0)=0
\end{split}\ee
In comparison with  \eqref{inc:sys}, there are two types of nonlinear interactions to be studied,\[\begin{split}\mbox{``fast-fast''}&\qquad\cP \cN(\vU^Q,\vU^Q;\ep),\\
\mbox{``fast-slow''}&\qquad\cP \cN(\vU^Q,\vU^P;0)+\cP \cN(\vU^P,\vU^Q;0).\end{split}\] 

Intuitively, one expects the nonlinearity of $\cN$ to generate resonance   from all kinds of interaction among $\vUn$ and $\vUa$. However, the following lemma \emph{excludes} the contribution of ``fast-fast'' interaction from $\image\cP$.

\begin{lemma}\label{ff:lemma}For any $U^Q\in\image \cQ$ with sufficient regularity,
\[\cP \cN(\vUa,\nabla\vUa;\ep)=0.\]
\end{lemma}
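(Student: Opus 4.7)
The plan is to exhibit $\cN(\vUa,\nabla\vUa;\ep)$ as an element of $\image\cL$ which, by Proposition \ref{prop:dual}, is precisely $\kernel\cP|_{H^{m-1}}$; the conclusion $\cP\cN(\vUa,\nabla\vUa;\ep)=0$ is then immediate.

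The key is that both components of $\cN(\vUa,\nabla\vUa;\ep)$ collapse thanks to the gradient structure of $u^Q$. Since $U^Q\in\image\cQ$, Proposition \ref{prop:dual} gives $u^Q=\nabla\phi$ for some potential $\phi$. For any irrotational vector field, $u^Q\cn u^Q=\nabla(\tfrac{1}{2}|u^Q|^2)$; and introducing the scalar antiderivative $G(\rho;\ep):=\int_0^\rho g(s;\ep)\,ds$ gives $g(\rho;\ep)\nabla\rho=\nabla G(\rho;\ep)$. The velocity component of $\cN$ therefore reduces to the pure gradient
\[
u^Q\cn u^Q + g(\rho;\ep)\nabla\rho \;=\; \nabla\Psi,\qquad \Psi:=\tfrac12|u^Q|^2+G(\rho;\ep).
\]
The density component similarly condenses to a divergence by the product rule:
\[
u^Q\cn\rho+\rho\,\nc u^Q \;=\; \nc(\rho u^Q).
\]

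Next, I would define $V:=\bpm\Psi-\bar\Psi\\ \rho u^Q\epm$ with $\bar\Psi:=|\Omega|^{-1}\int_\Omega\Psi\,dx$, and verify $V\in\adm^m$: the density slot has zero mean by construction, and since $u^Q\ccdot\nm\bc=0$ is inherited from $U^Q$ being in the admissible class (as $U^Q\in\image\cQ\cap\adm^m$ for solutions under consideration), the velocity slot $\rho u^Q$ satisfies the solid-wall condition. By construction $\cL V=\cN(\vUa,\nabla\vUa;\ep)$ as a pointwise identity. The regularity estimate \eqref{cal:ineq} together with $m>\mD/2+4$ ensures $\Psi,\rho u^Q\in H^m$, so $V$ lies in the space required by Proposition \ref{prop:dual} to conclude $\cL V\in\kernel\cP$.

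There is no real obstacle here; the content of the lemma is the structural observation that the nonlinear self-interaction of an acoustic mode is itself of acoustic type. Because the convective term of a gradient velocity is again a gradient, and the barotropic pressure term is a gradient of a scalar function of $\rho$, these two contributions together fill out exactly the second slot of $\image\cL$, while the continuity-type term collapses to a divergence compatible with the first slot. This is the structural cancellation that eliminates fast--fast resonance from entering the slow, incompressible dynamics in \eqref{vUn:sys}.
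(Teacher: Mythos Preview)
Your proof is correct and shares the paper's core observation: since $u^Q=\nabla\phi$, the velocity component $u^Q\cn u^Q+g(\rho;\ep)\nabla\rho$ is a pure gradient. The paper reaches the conclusion more quickly by noting that the density component of $\image\cP$ is always zero, so there is no need to treat the density slot of $\cN$ or to construct an explicit preimage $V\in\adm^m$; once the velocity slot of $\cN$ is seen to be a gradient, $\cP\cN=0$ follows immediately.
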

\begin{proof}The density component of $\image\cP$ is always zero.  By Proposition \ref{prop:dual},  it suffices to show that \[u^Q=\nabla \phi\quad\mbox{implies}\quad u^Q\cn u^Q+g(\rho;\ep)\nabla\rho\mbox{ is also a gradient.}\]
This is true due to the Calculus identity\[(\nabla)\phi\cn(\nabla\phi)={1\over2}\nabla|\nabla\phi|^2,\]
and the fact $g(\rho;\ep)$ is a function of $\rho$ only.
\end{proof}
 
Next, move on to ``fast-slow'' interaction: $\cP \cN(\vU^Q,\vU^P;0)+\cP \cN(\vU^P,\vU^Q;0)$. Although it does not have the same cancellation property as ``fast-fast'' interaction, the next lemma reveals that, upon \emph{averaging in time}, it is of $O(\ep)$. In other words, ``fast-slow'' interaction is asymptotically negligible when averaged in time.
\begin{lemma}\label{sf:lemma}For any $U=U^Q+U^P\in\adm^m$ with $m>\mD/2+3$ that solves \eqref{Euler:sys} and satisfies the non-vacuum condition $\|\ep\rho\|_{\cC([0,T];H^{m})}\le1/2$,\[\begin{split}&\left\|\int_0^T\cP \cN(\vUn,\nabla\vUa;0)+\cP \cN(\vUa,\nabla\vUn;0)\,dt\right\|_{H^{m-2}} \\
&\quad\le C\displaystyle{\max_t\|\vU\|^2_{H^m}\left(1+T\max_t\|\vU\|_{H^m}\right)^2}\end{split}\]\end{lemma}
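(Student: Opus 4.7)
The plan is to exploit the fast oscillation of $\vUa$ through the $\cQ$--projected evolution equation. Applying $\cQ$ to \eqref{Euler:sys}, using $\cL\vUn=0$ together with $\cQ\cL\vUa=\cL\vUa$ (since $\cL\vUa\in\image\cL=\image\cQ$ by Proposition \ref{prop:dual}), yields
\[\pt\vUa+\tfrac{1}{\ep}\cL\vUa=-\cQ\cN(\vU,\nabla\vU;\ep).\]
Heuristically $\vUa$ oscillates at time scale $\ep$, so its antiderivative in $t$ should already be of order $\ep$; integrating by parts in $t$ in each fast--slow term will then transfer one time derivative onto the slow field $\vUn$, whose time derivative $\pt\vUn=-\cP\cN(\vU,\nabla\vU;\ep)$ is bounded uniformly in $\ep$.

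Set $W(t):=\int_0^t\vUa(s)\,ds$. Since the $L^2$ projection $\cQ$ commutes with integration in $t$, one has $W\in\image\cQ$; the conditions $u^W\ccdot\nm\bc=0$ and $\int_\Omega\rho^W\,dx=0$ putting $W$ in $\adm^m$ are inherited from $\vUa$. Integrating the displayed equation in $t$ gives
\[\cL W(t)=-\ep\bigl(\vUa(t)-\vUa(0)\bigr)-\ep\int_0^t\cQ\cN(\vU,\nabla\vU;\ep)\,ds,\]
after which Proposition \ref{prop:cQ} together with \eqref{cal:ineq} delivers the crucial elliptic gain
\[\|W(t)\|_{H^m}\le C\|\cL W(t)\|_{H^{m-1}}\le C\ep\max_t\|\vU\|_{H^m}\bigl(1+T\max_t\|\vU\|_{H^m}\bigr).\]

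Now rewrite $\vUa=\pt W$ and $\nabla\vUa=\pt\nabla W$ in the two fast--slow pieces and integrate by parts in time:
\bals
\int_0^T\cP\cN(\vUa,\nabla\vUn;0)\,dt&=\cP\cN(W,\nabla\vUn;0)\Bigr|_0^T-\int_0^T\cP\cN(W,\nabla\pt\vUn;0)\,dt,\\
\int_0^T\cP\cN(\vUn,\nabla\vUa;0)\,dt&=\cP\cN(\vUn,\nabla W;0)\Bigr|_0^T-\int_0^T\cP\cN(\pt\vUn,\nabla W;0)\,dt.
\eals
The $t=0$ boundary contributions vanish because $W(0)=0$. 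The remaining boundary pieces at $t=T$ are controlled in $H^{m-2}$ by combining the $O(\ep)$ bound on $\|W\|_{H^m}$, the $H^{m-2}$--boundedness of $\cP$ (through the elliptic estimate underlying \eqref{est:cP}), and the product inequality \eqref{cal:ineq}. For the interior integrals, the uniform bound $\|\pt\vUn\|_{H^{m-1}}\le C\|\vU\|_{H^m}^2$ coming from the $\cP$--projection of \eqref{Euler:sys} introduces the extra factor $T\max_t\|\vU\|_{H^m}$, producing the $(1+T\max_t\|\vU\|_{H^m})^2$ dependence in the claimed bound.

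The main obstacle is verifying that $W\in\image\cQ\cap\adm^m$ at every $t$, so that Proposition \ref{prop:cQ} applies and the $\ep$ sitting in front of $\cL W$ transfers to $W$ itself; without this elliptic gain the whole scheme collapses. A secondary subtlety is the bookkeeping of derivatives: integrating by parts in time against $\nabla\vUa=\pt\nabla W$ already consumes one spatial derivative beyond the one built into $\cN$, which is why the estimate is stated in $H^{m-2}$ rather than $H^{m-1}$, and why the hypothesis $m>\mD/2+3$ (so that $m-2>\mD/2+1$) is the right threshold for \eqref{cal:ineq} to close.
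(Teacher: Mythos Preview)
Your proposal is correct and follows essentially the same route as the paper: both arguments hinge on showing $\bigl\|\int_0^t\vUa\bigr\|_{H^m}=O(\ep)$ via Proposition~\ref{prop:cQ} applied to the time-integrated $\cL$-equation, and then integrating by parts in $t$ so that the slow factor absorbs the time derivative through the bound $\|\pt\vUn\|_{H^{m-1}}\le C\|\vU\|_{H^m}^2$. The only cosmetic difference is that the paper first reduces to the velocity component (since $\rho^P=0$ and $\nc u^P=0$ make the density slot of the fast--slow terms vanish) and uses the unprojected equation $\cL\vU=-\ep(\pt\vU+\cN)$ directly, whereas you keep the full $(\mD+1)$-vector and pass through the $\cQ$-projected equation; the resulting estimates and regularity bookkeeping are identical.
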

\begin{proof}Since the definition of $\cP$ is independent of the density component, it suffices to estimate\[\int_0^T\cP\bpm 0\\u^P\cn u^Q+u^Q\cn u^P\epm\,dt.\]By \eqref{est:cP},  $\cP$ can be dropped as well.
Now, we demonstrate the estimate on $u^Q\cn u^P$, the other part being very similar.

The idea is to treat the slow component $u^P$ like a constant and focus on averaging the fast oscillatory part $u^Q$ in time. Integrating by parts is therefore in order. But first, a quick calculation reveals that
\begin{align*}
&\int_0^t\cL\vUa\,ds= \int_0^t\cL U\,ds&\text{by }\cL\cP=0\\=&-\ep \int_0^t\pt\vU+\cN(\vU,\nabla\vU;\ep)\,ds&\text{by \eqref{Euler:sys}}\\ =& -\ep\left(U\bigr|_0^t+\int_0^t\cN(\vU,\nabla\vU;\ep)\,ds\right)&\end{align*}
Since Proposition \ref{prop:cQ} yields $\|U^Q\|_{H^m}\le C\|\cL U^Q\|_{H^{m-1}}$ and therefore $\|\int_0^tU^Q\|_{H^m}\le C\|\int_0^t\cL U^Q\|_{H^{m-1}}$, it follows\[\left\|\int_0^tU^Q\right\|_{H^m}\le C\ep\left\|U\Bigr|_0^t+\int_0^t\cN(\vU,\nabla\vU;\ep)\,ds\right\|_{H^{m-1}}\]
By \eqref{cal:ineq}, the RHS is bounded by \[...\le C\ep\left(\max_t\|\vU\|_{m-1}+t\max_t\|\vU\|_{H^m}(\|\vU\|_{H^m}
+\|g(\rho;\ep)\|_{H^{m}})\right).\]
The definition of $g$ in \eqref{def:g}, together with $p'(1)=1$, indicates that its dependence on $\ep$ is essentially via positive powers of $\ep\rho$. The non-vacuum condition $\|\ep\rho\|_{\cC([0,T];H^{m})}\le1/2$ then ensures that $\|g(\rho;\ep)\|_{H^{m-1}}\le C\|\rho\|_{H^{m-1}}$. Therefore, it has been established that
\[\left\|\int_0^tU^Q\right\|_{H^m}\le C\ep \left(\max_t\|\vU\|_{H^m}+t\max_t\|\vU\|_{H^m}^2\right),\]
and, in particular, \[\hat{u}^Q:=\int_0^tu^Q\;\;\mbox{ is bounded in }H^m(\Omega)\mbox{ norm by the RHS above}.\]

Then, perform integrating by parts in time as follows,
\[\int_0^T u^Q\cn u^P\,dt=\hat{u}^Q\cn u^P\Bigr|_0^T-\int_0^T \hat{u}^Q\cn\pt u^P\,dt,\]and arrive at
\[\begin{split}\left\|\int_0^T u^Q\cn u^P\,dt\right\|_{H^{m-2}}\le &C\ep\left(\max_t\|\vU\|_{H^m}+T\max_t\|\vU\|^2_{H^m}\right)\times\\&\left(\max_t\|\vU^P\|_{H^{m}}+T\max_t\|\pt\vUn\|_{H^{m-1}}\right).\end{split}\]
The last term of $\|\pt\vUn\|_{H^{m-1}}$ are estimated using \eqref{vUn:sys} and the fact that $\cN$ depends on $\ep$ via positive powers of $\ep\rho$.
\end{proof}

To close this section, we prove Theorem \ref{thm:strong}.
\begin{proof}[Proof of Theorem \ref{thm:strong}]
Lemma \ref{ff:lemma} and \ref{sf:lemma} together with the integral version of \eqref{vUn:sys} lead to,
\begin{align}\label{r:def} \mbox{ for }-\eps(t,\ccdot):=&u^P\Bigr|_0^t+\int_0^t\tilde{\cP}(u^P\cn u^P)\,ds,\\
\label{est:r}\left\|\eps\right\|_{H^{m-2}(\Omega)}\le &C\ep\displaystyle{\max_t\|\vU\|^2_{H^m}\left(1+T\max_t\|\vU\|_{H^m}\right)^2}.\end{align}
Here and below, $\tilde{\cP}$ is merely the restriction of $\cP$ on the velocity component.
In differential form, $\eps$ satisfies
\be\label{r:pt}-\pt \eps=\pt u^P+\tilde{\cP}(u^P\cn u^P),\quad \eps(0,\ccdot)=0.\ee

Then, subtract \eqref{r:pt} from the incompressible Euler equations \eqref{inc:Euler} and use the fact that $\cI-\tilde{\cP}$ always yields a potential flow to obtain an equation for $e:=\vv-u^P$,
\[\begin{split}\pt \eps=&\pt e+ (e+u^P)\cn(e+u^P)-u^P\cn u^P+\nabla q,\end{split}\]
where $q$ is determined from the incompressibility condition $\nc \eps=\nc e=0$.

Directly applying the energy method to estimate $e$ in the above equation won't work since $\pt \eps$ is of order $O(1)$ by \eqref{r:pt}. To this end, we ``hide'' $\pt \eps$ and look at $e_1 :=e-\eps$ which satisfies
\be\label{e1:pt}\begin{split}-\pt e_1(t)=&(e_1+\eps+u^P)\cn(e_1+\eps+u^P)- u^P\cn u^P+\nabla q\\
=&e_1\cn e_1+e_1\cn f+f\cn e_1 +f^\ep+\nabla q.\end{split}\ee
Here, $f:=\eps+u^P$, $f^\ep:=(\eps+u^P)\cn(\eps+u^P)-u^P\cn u^P$. By \eqref{est:r}, they satisfy estimates \[\|f\|_{H^{m-2}}\le F(\max_t\|U\|_{H^m},T),\qquad\|f^\ep\|_{H^{m-3}}\le \ep F(\max_t\|U\|_{H^m},T)\]for some polynomial $F$.

To apply the energy method to the above system, note that $\eps$, $e$ and $e_1$ all belong to $H^{m-2}(\Omega)$ and satisfy the incompressibility condition and solid-wall boundary condition. Thus, by elliptic estimate \eqref{elliptic:bc},
\[\|e_1\|_{H^{m-3}}\le C(\|e_1\|_{L^2}+\|\nt e_1\|_{H^{m-4}}).\]

It remains to establish upper bounds for $\pt\|e_1\|_{L^2}$ and $\pt\|\nt e_1\|_{H^{m-4}}$ in terms of $\|U\|_{H^m}$. For simplicity, we skip this part, only alluding to the facts that \eqref{e1:pt} is a symmetric Hyperbolic PDE system with dissipative boundary condition and is equipped with a vorticity equation for $\nt e_1$. Also, one can take spatial derivatives on \eqref{e1:pt} up to $(m-3)$-th order and still retain $H^1(\Omega)$ regularity for $\pa_x^{m-3}e_1,\,\pa_x^{m-3}f$ and therefore $H^{1/2}(\pa\Omega)$ regularity for any boundary integrals involved in the process.
 
Finally, we obtain estimates for $\vv-u^P=e_1+\eps$ as desired.
\end{proof}
\section{A Priori Energy Estimates}\label{sec:a:priori}
Recall the equation of state: pressure=$p(1+\ep\rho)$  with $p(1)=0$, $p'(1)=1$ and introduce a new unknown $r:={p(1+\ep\rho)\over \ep}$ s.t.
\be\label{sys:V:ep}\left\{\begin{split}{1\over p^{-1}(\ep r)p'(p^{-1}(\ep r))}\left(r_t+\vu\cn r\right)+{\nc\vu\over\ep}&=0,\\
p^{-1}(\ep r)\left(\vu_t+\vu\cn\vu\right)+{\nabla r\over\ep}&=0.\end{split}\right.\ee
Then, rescale this system by replacing\[u\to\ep u,\;\; r\to\ep r,\;\; t\to{1\over\ep} t\]and arrive at
\[\left\{\begin{split}{1\over p^{-1}(  r)p'(p^{-1}(  r))}\left(r_t+\vu\cn r\right)+{\nc\vu }&=0,\\
p^{-1}( r)\left(\vu_t+\vu\cn\vu\right)+{\nabla r }&=0.\end{split}\right.\]
Finally, rewrite it as a symmetric hyperbolic PDE system for the rescaled variable $V:=\bpm r\\u\epm$, \be\label{sys:V}A_0({V})\pt V+\sum_{j=1}^{\mD}A_j(V)\pa_{x_j} V+{\cL}[V]=0\ee with the obvious definitions of $A_0,\,A_j$. In particular, $A_0(0)=I_{\mD+1}$. 

This rescaled system will be the main subject of this section for it offers the convenience of free of any $\ep$ terms.

Define an energy norm for scalar or vectorial function $f(t,x)$ at any given $t$,
\be\label{def:en}\en{f}_m:=\sqrt{\sum_{|\alpha|+|\beta|\le m}\int_\Omega|\pt^\alpha\px^\beta f(t,x)|^2\,dx}.\ee
Also, introduce some shorthand notations for simplicity.
Let $B_0(t,x),\,B_1(t,x),\,...\,B_{\mD}(t,x)$ denote symmetric-matrix-valued functions of dimension $(\mD+1)\times(\mD+1)$. Define $\vB:=(B_0,....,B_{\mD})$ and $div_{t,x}\vB:=\pa_tB_0+\sum_{j=1}^{\mD}\pa_{x_j}B_j$. Define the boundary matrix using the outward normal $\nm=(\nm_1,...,\nm_\mD)$,
\[B_\nm:=\sum_{j=1}^{\mD}\nm_jB_j \mmbox{on} \pa{\Omega}.\]

The main theorem of this section is regarding \emph{a priori} estimates of linear systems equipped with certain structural features. Estimates for nonlinear systems then follow naturally.

\begin{theorem}\label{thm:linear}Consider the linear symmetric hyperbolic system, \begin{align}\label{lin:V}B_0\pt V+\sum_{j=1}^{\mD}B_j\pa_{x_j} V+{\cL}[V]&=F,\\ 
\label{lin:V:bc}u\ccdot\nm\bc&=0,\end{align}
with possible dissipation $B_\nm\bc\ge0$ and all the  eigenvalues of $B_0$ located in $[1/2,\,2]$. Also, impose a structural assumption that there exists a ``vorticity'' operator $\cK$ of first order differentiation s.t. applying $\cK$ to this system results in a ``vorticity equation'' (at least for smooth solutions)\be\label{structure}B_0\pt\cK[V]+\sum_{j=1}^{\mD}B_j\pa_{x_j}\cK[ V]=F+F_1\ee
where $F_1$ is a sum of products  of entries from $\pa_{x}\vB$ with entries from $\pa_{t,x}V$.

 Then, for any $V,\,\vB \in \sum_{j=0}^m\cC^j([0,T];\,H^{m-j}(\Omega))$ with integer $m>{\mD/2}+1$ satisfying \eqref{lin:V}, \eqref{lin:V:bc}, there exists a universal constant $C=C(m,\Omega)$ s.t.\be\label{en:ineq}\en{V}_m(t)\le C\Bigl(\en{V_0}_m+\en{F}_m +\cE(B) \en{V}_m +\int_0^t\cE(B) \en{V}_m +\en{F}_m\,dt'\Bigr)\ee
where\[\cE(B):=\en{\pa_{t,x}B_0}_{m-1}+\sum_{j=1}^{\mD}\en{B_j}_m.\] \end{theorem}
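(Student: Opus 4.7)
The plan is to derive \eqref{en:ineq} by combining three ingredients: (i) an $L^2$ energy identity for every pure time-derivative $\pt^\alpha V$, (ii) the structural vorticity equation \eqref{structure}, which is free of the singular term $\cL$ and so admits an $L^2$ energy identity for each $\pt^\alpha\cK V$, and (iii) the elliptic estimate \eqref{elliptic:LK} used to promote $L^2$ control on $\cL V$ and $\cK V$ into full $H^m$-control on $V$. The cornerstone of the argument is that $\nm$ does not depend on time, so the boundary condition $u\ccdot\nm\bc=0$ is preserved by arbitrarily many $\pt$-differentiations; consequently each $\pt^\alpha V$ lies in the admissible space $\adm^{m-|\alpha|}$ on which both the elliptic estimate and the skew-adjointness of $\cL$ apply.

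For step (i) I would apply $\pt^\alpha$ with $|\alpha|\le m$ to \eqref{lin:V}, pair with $\pt^\alpha V$ in $L^2(\Omega)$, and integrate. Three boundary/divergence terms then arise and all work in our favor: the interior divergence contribution reduces to a $\cE(B)\en{V}_m^2$ type term via \eqref{cal:ineq}; the boundary integral $\int_{\pa\Omega}\pt^\alpha V\ccdot B_\nm\pt^\alpha V\,ds$ is non-negative by the dissipativity hypothesis $B_\nm\bc\ge0$ and therefore drops on the upper side; and the boundary integral inherited from $\cL$, namely $\int_{\pa\Omega}\pt^\alpha\rho\,(\pt^\alpha u\ccdot\nm)\,ds$, vanishes outright because $\pt^\alpha u\ccdot\nm\bc=0$. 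The commutators $[\pt^\alpha,B_0\pt]V$ and $[\pt^\alpha,\sum_j B_j\pa_{x_j}]V$ are sums of products of derivatives of $\vB$ with derivatives of $V$ and are therefore bounded by $\cE(B)\en{V}_m$ through \eqref{cal:ineq}. Performing exactly the same computation on \eqref{structure} for $|\alpha|\le m-1$ against $\pt^\alpha\cK V$ yields the analogous vorticity bound; no characteristic-boundary subtlety appears since $\cL$ is absent, and the extra source $F_1$ is of the required product form by hypothesis.

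For step (iii) I apply \eqref{elliptic:LK} to $\pt^\alpha V\in\adm^{m-|\alpha|}$ at each fixed time:
\[\|\pt^\alpha V\|_{H^{m-|\alpha|}}\le C\bigl(\|\cL\pt^\alpha V\|_{H^{m-|\alpha|-1}}+\|\cK\pt^\alpha V\|_{H^{m-|\alpha|-1}}+\|\pt^\alpha V\|_{L^2}\bigr).\]
The $\cK$-term is controlled by step (ii), and $\cL\pt^\alpha V$ is read off \eqref{lin:V} as $\pt^\alpha F-\pt^\alpha(B_0\pt V)-\sum_j\pt^\alpha(B_j\pa_{x_j}V)$. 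Summing over $|\alpha|$ and assembling with the energy bounds from (i) yields the claimed estimate on $\en{V}_m(t)$.

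The main obstacle sits in this reading-off: the term $B_j\pa_{x_j}\pt^\alpha V$ inside $\cL\pt^\alpha V$ carries the highest-order spatial derivative $\pa_{x_j}\pt^\alpha V$, which is itself a component of the very $\|\pt^\alpha V\|_{H^{m-|\alpha|}}$ on the left. This is precisely the origin of the non-integrated $\cE(B)\en{V}_m$ term on the right of \eqref{en:ineq}: the obstruction is passed to the RHS in exactly this form rather than being closed by the constant $C(m,\Omega)$ alone, with the understanding that in the nonlinear application of the theorem it will be controlled by a smallness or bootstrap argument on $V$. A secondary technical point is that the high-derivative manipulations of step (i) formally require more regularity than $V$ a priori possesses near $\pa\Omega$; the standard remedy, flagged in the introduction, is a spatial mollification followed by passage to the limit.
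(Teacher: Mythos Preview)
Your three-step architecture is the right one and matches the paper's Lemmas~4.2--4.3 closely, but there is a genuine gap between your step~(ii) and step~(iii).

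In step~(iii) you invoke the elliptic estimate on $\pt^\alpha V$ and need $\|\cK\pt^\alpha V\|_{H^{m-|\alpha|-1}}$. For $|\alpha|<m-1$ this quantity contains \emph{spatial} derivatives of $\cK V$: it is a piece of the full space-time norm $\en{\cK V}_{m-1}$. Your step~(ii), however, delivers only $\|\pt^\beta\cK V\|_{L^2}$ for $|\beta|\le m-1$, i.e.\ pure time derivatives of the vorticity. These agree only at the top level $|\alpha|=m-1$. At lower $|\alpha|$ the missing terms $\|\pa_x^\gamma\pt^\alpha\cK V\|_{L^2}$ with $|\gamma|\ge1$ are of the same order as the left-hand side of the elliptic inequality, so no downward induction on $|\alpha|$ alone closes: each attempt to bound $\pa_x^\gamma\cK\pt^\alpha V$ by $\|\pt^\alpha V\|_{H^{|\gamma|+1}}$ lands back on the quantity being estimated. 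Reading spatial derivatives of $\cK V$ off the vorticity equation itself also fails, since $\sum_jB_j\pa_{x_j}\cK V$ is a degenerate combination (for Euler, $B_\nm=0$ on $\pa\Omega$).

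The remedy is precisely the observation you make but do not exploit. Because $\cL$ is absent from \eqref{structure}, the \emph{only} boundary contribution in the vorticity energy identity is $\int_{\pa\Omega}w\ccdot B_\nm w$, and this carries the favorable sign by the dissipativity hypothesis \emph{regardless of whether the differentiation contains spatial directions}. Hence step~(ii) can and must be upgraded from $\pt^\alpha\cK V$ to the full $\pa_{t,x}^\alpha\cK V$ with $|\alpha|\le m-1$, yielding control of $\en{\cK V}_{m-1}$. This is exactly what the paper does in Lemma~4.2: it applies arbitrary $\pa_{t,x}^\alpha$, mollifies in the $x$-variable on the shrunk domain $\Omega_\delta$, and handles the boundary integral on $\pa\Omega_\delta$ by $B_\nm\bc\ge0$ plus an $O(\delta)$ perturbation. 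The mollification you flag for step~(i) is thus principally needed here in step~(ii) and is \emph{spatial}; for the pure-time estimate on $\pt^mV$ the paper instead mollifies in $t$, since the boundary condition is already preserved and the deficit is only that $\pt^mV\in C^0_tL^2_x$.
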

\begin{remark}
The compressible Euler equations naturally satisfy the structural assumption due to the vorticity equation \[\pt\omega+u\cn\omega+\mbox{lower order terms}=0.\]
\end{remark}
This theorem is an immediate consequence of the following two lemmas. Lemma \ref{lemma:K:pt} uses energy methods and a careful mollification strategy to estimate the space-time norms of the vorticity $\cK V$ and the $L^2$ norms of $\pt^k V$. All boundary integrals in this calculation either vanish or have the correct signs since: 1. the vorticity equation does not have an $\cL$ term; 2. time differentiation of $V$ retains the boundary condition, at least up to $(m-1)$-th order. Lemma \ref{lemma:L} uses elliptic estimates to bound the rest part of $\en{V}_m$ in terms of the already established bounds.

To accommodate the lack of boundary regularity for $\pt^m V\in L^2(\Omega)$,  a mollification process is used in the interior of $\Omega$. We note by passing that it is proved in Rauch \cite{Rauch:char}: $V$ being a weak $L^2$ solution to certain type of hyperbolic PDE system implies the trace $V\ccdot B_{\nm}V$ exists in a very weak sense. For the mere sake of \emph{a priori} estimates, nevertheless, we provide a self-contained argument with somewhat loosened assumptions. To this end, 
pick a $\cC^\infty$ function $\psi(\ccdot)$ with $supp \psi=(-1,1)$, $0\le \psi(\ccdot)\le1$  and $\int_\mR \psi(z)\,dz=1$. Then, define a $\cC^\infty$ mollifier in  the $x$ variable $\psi^x_\de(x):=\prod_{j=1}^{\mD}{1\over\de}\psi\left({x_j\over\de}\right)$ and another one in the $t$ variable $\psi^t_\de(t):={1\over\de}\psi\left({t\over\de}\right)$. 

Also, define $\Omega_\de:=\{x\in\Omega\bigr|dist(x,\pa\Omega)\ge\de\}$. 
\begin{proposition}\label{prop:chi}Let $\chi^t_\de(t,x):={\mathbf 1}_{\tiny{[\de,T-\de]\times\Omega}}$ and $\chi^x_\de(t,x):={\mathbf 1}_{\tiny{[0,T]\times\Omega_\de}}$. Then, for any $f(t,x)\in\cC([0,T];L^2(\Omega))$, as $\de\to0^+$, \[\chi^t_\de\ccdot(\psi_\de^t*f),\qquad \chi^x_\de\ccdot( \psi^x_\de*f)\]both converge to $f(t,x)$    strongly in $L^2(\Omega)$ for fixed $t\in(0,T)$ and  strongly in $L^2([0,T]\times\Omega)$. Moreover, for any $\cC^1$ function $g(t,x)$, the commutators
\[\chi^t_\de\ccdot\pt\left( g\psi^t_\de*f - \psi^t_\de*(gf)\right),\quad\chi^x_\de\ccdot\px\left( g\psi^x_\de*f - \psi^x_\de*(gf)\right)\]converge to 0  weakly in $L^2([0,T]\times\Omega)$.
\end{proposition}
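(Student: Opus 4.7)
The convergence claims reduce to the classical mollifier theorem; the cutoffs $\chi_\de^t,\chi_\de^x$ serve only to sidestep boundary-extension issues. For the time case, note that on $[\de,T-\de]$ the convolution
\[(\psi_\de^t*f)(t,\ccdot)=\int_{-\de}^{\de}\psi_\de^t(s)\,f(t-s,\ccdot)\,ds\]
samples $f$ only at times in $[0,T]$, so no extension of $f$ is needed. For fixed $t\in(0,T)$ and $\de<\min(t,T-t)$,
\[\|(\psi_\de^t*f)(t,\ccdot)-f(t,\ccdot)\|_{L^2(\Omega)}\le\sup_{|s|\le\de}\|f(t-s,\ccdot)-f(t,\ccdot)\|_{L^2(\Omega)}\to0\]
by the uniform continuity in $t$ of $f\in\cC([0,T];L^2(\Omega))$. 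The $L^2([0,T]\times\Omega)$-statement follows by dominated convergence using the uniform bound $\|(\psi_\de^t*f)(t,\ccdot)\|_{L^2(\Omega)}\le\sup_t\|f(t,\ccdot)\|_{L^2(\Omega)}$. The spatial case is identical, with $\Omega_\de$ replacing $[\de,T-\de]$.

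For the commutator, expand
\[\pt\bigl(g\,\psi_\de^t*f-\psi_\de^t*(gf)\bigr)=g_t\,(\psi_\de^t*f)+R_\de,\quad R_\de:=g\bigl(\pa_t\psi_\de^t*f\bigr)-\pa_t\psi_\de^t*(gf).\]
The first summand converges strongly in $L^2$ to $g_tf$. For the Friedrichs commutator $R_\de$, use the integral representation
\[R_\de(t,x)=\int\pa_t\psi_\de^t(s)\bigl[g(t,x)-g(t-s,x)\bigr]f(t-s,x)\,ds.\]
The substitution $s=\de v$ gives the key identity $s\,\pa_t\psi_\de^t(s)\,ds=v\,\psi'(v)\,dv$. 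Applying the Taylor expansion $g(t,x)-g(t-s,x)=s\,g_t(t,x)+s\,\eta_\de(s,t,x)$ with $\sup|\eta_\de|\to0$ (by uniform continuity of $g_t\in\cC^0$), the main part of $R_\de$ becomes
\[g_t(t,x)\int_{-1}^{1}v\,\psi'(v)\,f(t-\de v,x)\,dv\;\longrightarrow\;-\,g_t(t,x)\,f(t,x),\]
using $\int v\psi'(v)\,dv=-\int\psi\,dv=-1$ (integration by parts). The error term is bounded by $\sup|\eta_\de|\cdot\|f\|_{L^2}\to0$. Hence $R_\de\to-g_tf$ in $L^2$, and summing with $g_t(\psi_\de^t*f)\to g_tf$, the total converges to zero (in fact strongly) in $L^2([0,T]\times\Omega)$; multiplication by $\chi_\de^t\to 1$ with uniform bound preserves this limit.

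The main technical subtlety is that $\pa_t\psi_\de^t$ blows up pointwise like $\de^{-2}$, but the single $s$-factor extracted from $g\in\cC^1$ absorbs one power of $\de$, leaving a genuine approximate identity with total mass $-1$. This is exactly the Friedrichs commutator estimate $\|R_\de\|_{L^2_{t,x}}\le C\|g\|_{\cC^1}\|f\|_{L^2_{t,x}}$, which both controls the limit and supplies the weak $L^2$-compactness asserted in the proposition. The spatial commutator is handled component-by-component, with the relevant identity $\int v_j\pa_j\psi(v)\,dv=-1$ for each $j$ and $\chi_\de^x$ playing the role of $\chi_\de^t$.
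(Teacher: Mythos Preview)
Your argument is correct and in fact establishes \emph{strong} $L^2$ convergence of the commutators, which is more than the proposition asserts. The paper does not give a detailed proof; it simply declares the result ``classic'' and outlines a different, duality-based route for the commutator part: pair against a test function $\phi\in\cC^1_0((0,T)\times\Omega)$, drop the cutoff (which equals $1$ on $\mathrm{supp}\,\phi$ for small $\de$), integrate the $\pt$ or $\px$ by parts onto $\phi$, and then invoke the already-proved strong convergence of the \emph{undifferentiated} quantities $g\,\psi_\de*f\to gf$ and $\psi_\de*(gf)\to gf$. Density of $\cC^1_0$ in $L^2$ is then cited to upgrade to all $L^2$ test functions.

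Your direct Friedrichs-commutator computation is more self-contained: you never pass to a test function, and the key cancellation $\int v\psi'(v)\,dv=-1$ makes the limit explicit. The paper's duality shortcut is slicker but, as stated, tacitly relies on a uniform $L^2$ bound for the commutators (otherwise convergence against a dense subclass does not imply weak convergence); your approach supplies exactly that bound as a byproduct. For the application in Lemma~\ref{lemma:K:pt}, only weak convergence against an $L^2$ function (namely $w$) is used, so either route suffices.
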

The proof is classic. We remark that the weak convergence is a direct consequence of the strong convergence since one can always choose a testing function in $C^1_0([0,T]\times\Omega)$ to allow integration by parts. This suffices to validate weak convergence since  $C^1_0$ is dense in the $L^2$ topology.


\begin{lemma}\label{lemma:K:pt}Under the same assumptions as Theorem \ref{thm:linear}, \be\label{omega:pt1} \en{ \cK V}_{(m-1,B_0)}\Bigr|_{t_1}^{t_2}+\en{\pt^mV}_{(0,B_0)}\Bigr|_{t_1}^{t_2} \le C\left(\int_{t_1}^{t_2}\en{\pa_{t,x} \vB}_{m-1}\en{V}_m+\en{F}_m\,dt\right),\ee
for $0\le t_1<t_2\le T$. On the LHS, the subscript $B_0$ is understood as a weight that modifies the vector norm $|\cdot|$ in the definition \eqref{def:en}.
\end{lemma}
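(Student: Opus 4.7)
I would prove the lemma by running two parallel energy arguments: one for the vorticity quantity $\cK V$ using the structural equation \eqref{structure}, and one for the top pure time derivative $\pt^m V$ using the original equation \eqref{lin:V}. The unifying reason both arguments succeed is that in each case the potentially-bad $\cL$ boundary term is either absent (for $\cK V$) or vanishes on $\pa\Omega$ (for $\pt^m V$, via the propagated boundary condition), so the only surviving boundary integrals are of the form $\int_{\pa\Omega}(\cdot)^T B_\nm(\cdot)\,ds\ge 0$ and can be discarded.

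\textbf{Vorticity track.} For each multi-index $\alpha$ with $|\alpha|\le m-1$, I would apply $\pa^\alpha_{t,x}$ to \eqref{structure}, pair the result with $B_0\pa^\alpha\cK V$, and integrate over $\Omega$. Integration by parts on the $\sum_j B_j\pa_{x_j}$ piece yields one interior term proportional to $\mathrm{div}_{t,x}\vB$ and one boundary term $\int_{\pa\Omega}(\pa^\alpha \cK V)^T B_\nm(\pa^\alpha \cK V)\,ds\ge 0$, which I drop. Moving $\pa^\alpha$ past the coefficients produces commutators of the required schematic form, and \eqref{cal:ineq} together with the structure of $F_1$ bounds everything by $(\en{F}_m+\en{\pa_{t,x}\vB}_{m-1}\en{V}_m)\en{\cK V}_{m-1}$. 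Using the eigenvalue bound on $B_0$ to pass between weighted and unweighted norms, summing over $\alpha$, and dividing by $\en{\cK V}_{(m-1,B_0)}$ yields a differential inequality that integrates to the $\cK V$ portion of \eqref{omega:pt1}.

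\textbf{Pure time derivative track, with mollification.} The naive energy estimate for $\pt^m V$ would generate the boundary term $\int_{\pa\Omega}\pt^m\rho\,(\pt^m u\cdot\nm)\,ds$, which for $k<m$ vanishes (since $\pt^k u\cdot\nm\bc=0$ can be read off from the equations), but for $k=m$ the trace $\pt^m u\bc$ is not defined because $\pt^m V$ has only $L^2(\Omega)$ regularity. This is the main obstacle. To circumvent it, I would set $V_\de:=\psi^t_\de*\pt^{m-1}V$, derive an equation for $V_\de$ by convolving \eqref{lin:V} and commuting $\psi^t_\de*$ past the coefficients, and carry out the energy estimate on $V_\de$. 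Since $\pt^{m-1}V\in \cC([0,T];H^1(\Omega))$, the trace $\pt^{m-1}u\cdot\nm\bc=0$ is well-defined and propagates to $V_\de$, so the $\cL$ boundary contribution is exactly zero on the regularized level. The surviving $B_\nm$ boundary term is nonnegative and discarded. The commutator error $R_\de$ produced by mollification is handled by Proposition \ref{prop:chi}: its spatial $L^2$ pairing with $V_\de$ vanishes weakly as $\de\to 0^+$, while $\pt V_\de\to \pt^m V$ in $L^2$.

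\textbf{Conclusion.} Passing $\de\to 0^+$ and then integrating in $t$ from $t_1$ to $t_2$, I obtain the $\pt^m V$ part of \eqref{omega:pt1}. Adding the outcome of the vorticity track gives the full estimate. The technical heart of the argument is the mollification step: one must certify that every commutator introduced by $\psi^t_\de*$ drops out in the limit, which is precisely the content of Proposition \ref{prop:chi} and explains why the cutoffs $\chi^t_\de$ and $\chi^x_\de$ are introduced. All other ingredients (Leibniz expansions, \eqref{cal:ineq}, the eigenvalue bound on $B_0$) are routine.
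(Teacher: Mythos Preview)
Your overall plan---separate energy arguments for $\cK V$ via the vorticity equation and for $\pt^m V$ via time mollification---matches the paper, and your treatment of $\pt^m V$ is essentially the paper's. The gap is in the vorticity track.

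You propose to differentiate \eqref{structure} by $\pa^\alpha_{t,x}$, pair with $\pa^\alpha\cK V$, integrate by parts in $x$, and discard the nonnegative $B_\nm$ boundary term. But at the top order $|\alpha|=m-1$ the quantity $\pa^\alpha\cK V$ already carries $m$ derivatives of $V$ and lies only in $C_tL^2_x$: it has no trace on $\pa\Omega$, so neither the integration by parts nor the boundary integral you claim to drop is justified. You correctly flag this obstacle for $\pt^m V$ and repair it by convolving in time, but that device does not transfer here: for a multi-index $\alpha$ containing spatial derivatives, $\psi^t_\de*$ does nothing to improve spatial regularity, so the mollified quantity still fails to have a trace.

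The paper's fix is genuinely different. It mollifies in \emph{space}, sets $w:=\cK\,\psi^x_\de*\pa^\alpha V$, and runs the energy estimate on the shrunk domain $\Omega_\de$, where $w$ is smooth and the divergence theorem is legitimate. On $\pa\Omega_\de$, however, the hypothesis $B_\nm\ge 0$ is not available, so the boundary term cannot simply be dropped. The paper shows geometrically that $B_\nm\bigr|_{\pa\Omega_\de}$ differs from $B_\nm\bc$ by $O(\de\,|\pa_x\vB|_\infty)$, and then balances this against the $O(1/\de)$ blowup of $\|w\|^2_{L^2(\pa\Omega_\de)}$ coming from $\|\nabla\psi^x_\de\|_{L^1}\sim 1/\de$, so that the net contribution is bounded by the right-hand side of \eqref{omega:pt1}. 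The commutator $G_2$ produced by $\psi^x_\de*$ is split as $(I_1-I_2)+(I_2-I_3)$: the first piece is controlled precisely by the structural assumption \eqref{structure}, and the second vanishes weakly via Proposition~\ref{prop:chi}. This spatial-mollification-plus-boundary-balance argument is the actual technical core of the $\cK V$ estimate and is absent from your sketch.
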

\begin{proof}By continuity in time, we only need to consider $0<t_1<t_2<T$. Also, it is easy to check that for any continuous and positive-valued function $f(t)$, $g(t)$, $h(t)$,\[f\bigr|_{t_1}^{t_2} \le\int_{t_1}^{t_2}g(t)f(t)+h(t)\,dt\mmbox{iff}f^2\bigr|_{t_1}^{t_2}\le2\int_{t_1}^{t_2}g(t)f^2(t)+h(t)f(t)\,dt.\]Thus, it suffices to prove \be\label{omega:pt} \en{ \cK V}^2_{(m-1,B_0)}\Bigr|_{t_1}^{t_2}+\en{\pt^mV}^2_{(0,B_0)}\Bigr|_{t_1}^{t_2} \le C\left(\int_{t_1}^{t_2}\en{\pa_{t,x} B}_{m-1}\en{V}^2_m+\en{F}_m\en{V}_m\,dt\right).\ee

To estimate $\cK V$, first apply $\pa^\alpha_{t,x}$ with $|\alpha|\le m-1$ to \eqref{lin:V}  and obtain a system for $V^\alpha:=\pa_{t,x}^\alpha V$   well defined in $\cC([0,T]; L^2(\Omega))$,
\be\label{Va}B_0\pt V^\alpha+\sum_{j=1}^{\mD}B_j\pa_{x_j} V^\alpha+\cL[\Va]=F^\alpha\ee
where\[F^\alpha:=\pa^\alpha F-\sum_{j=0}^{\mD}\sum_{|\beta|=1}^{|\alpha|}c_\beta\pa^\beta B_j\pa^{\alpha-\beta}\pa_{x_j}V\]
with $c_\beta=0$   if $\alpha-\beta$ contains negative index. Next,  convolve this $L^2$ system with $\psi^x_\de$ in $x$ variable and apply $\cK$ to obtain,
\be\label{omega:a}B_0\pt  w +\sum_{j=1}^{\mD} B_j\pa_{x_j} w =G_1+G_2\ee
where \[\begin{split} w :=&\cK\psi_\de^x*\Va\\ G_1:=&\cK\psi_\de^x*F^\alpha\\
G_2:=&\left[B_0\pa_t+\sum_{j=1}^{\mD} B_j\pa_{x_j},\,\cK\psi_\de^x*\right]\Va,\end{split}\]
for $(t,x)\in[0,T]\times\Omega_\de$.
Here, we used the facts that $\cK\cL\equiv 0$ and everything is sufficiently smooth.

Now, take  $L^2([t_1,t_2]\times\Omega_\de)$ inner product of \eqref{omega:a} with $ w $, apply the divergence theorem in $t,x$ and use the symmetry of $B_0,...,B_{\mD}$ to calculate,\[\begin{split}&\int_{[t_1,t_2]\times\Omega_\de} w \ccdot(G_1+G_2)=\int_{[t_1,t_2]\times\Omega_\de} w \ccdot B_0\pt  w +\sum_{j=1}^{\mD} w \ccdot B_j\pa_{x_j} w \\= -&\int_{[t_1,t_2]\times\Omega_\de}\Bigl( w \ccdot B_0\pt  w+\sum_{j=1}^{\mD} w \ccdot B_j\pa_{x_j} w + w \ccdot (div_{t,x}  \vB)  w 
\Bigr)+\int_{\Omega_\de} w \ccdot B_0 w\Bigr|_{t_1}^{t_2} +\int_{[t_1,t_2]\times\pa\Omega_\de} w \ccdot B_\nm w \end{split}\]
It follows that
\be\label{div:thm}\int_{\Omega_\de}w\ccdot B_0w\Bigr|_{t_1}^{t_2}=\int_{[t_1,t_2]\times\Omega_\de}\Bigl( 2w\ccdot(G_1+G_2)+w\ccdot (div_{t,x}\vB)w\Bigr) -\int_{[t_1,t_2]\times\pa\Omega_\de} w \ccdot B_\nm w\ee
Here, outward normal $\nm$ is extended smoothly from $\pa\Omega$ to $\overline{\Omega}$ s.t. it is also the outward normal of $\pa\Omega_\de$ for  $\de<1/$curvature of $\pa\Omega$ and vanishes faraway from $\pa\Omega$.

Apply Proposition \ref{prop:chi} on \eqref{div:thm} to pass the strong limit  as $\de\to0^+$ on both sides. More precisely, up to a multiplication of $\chi_\de^x$, \[w(t,\ccdot)\to \pa^\alpha(\cK V(t,\ccdot))\mmbox{strongly in}L^2(\Omega)\mmbox{for fixed}t\]\[w\to \pa^\alpha(\cK V),\qquad G_1\to\cK F^\alpha\mmbox{strongly in}L^2([0,T]\times\Omega)\] 
and therefore the LHS of \eqref{div:thm} and the integral of $w\ccdot G_1$ and $w\ccdot div_{t,x}Bw$ on the RHS   converge to quantities that are bounded by corresponding terms of \eqref{omega:pt}.

Now, we are left with   the integral of $w\ccdot G_2$ and the boundary integral. Subtract and add $\cK(B_0\pa_t+\sum_{j=1}^{\mD} B_j\pa_{x_j})\psi_\de^x*\Va$ in between the two commuting terms of $G_2$,
\[\begin{split}G_2=&(B_0\pa_t+\sum_{j=1}^{\mD} B_j\pa_{x_j})\cK\psi_\de^x*\Va-\cK\psi_\de^x*(B_0\pa_t+\sum_{j=1}^{\mD} B_j\pa_{x_j})\Va\\
=&(B_0\pa_t+\sum_{j=1}^{\mD} B_j\pa_{x_j})\cK\psi_\de^x*\Va-\cK(B_0\pa_t+\sum_{j=1}^{\mD} B_j\pa_{x_j})\psi_\de^x*\Va\\&+\cK\Bigl((B_0\pa_t+\sum_{j=1}^{\mD} B_j\pa_{x_j})\psi^x_\de*\Va-\psi^x_\de*(B_0\pa_t+\sum_{j=1}^{\mD} B_j\pa_{x_j})\Va\Bigr)\\
=:&I_1-I_2+I_2-I_3.
\end{split}\]
By the structural assumption \eqref{structure}, $I_1-I_2$ consists of a sum of products of entries from $\pa_x\vB$ with entries from $\pa_{t,x}\psi_\de^x*\Va$ and therefore is bounded as desired. The $I_2-I_3$ part, by  Proposition \ref{prop:chi}, converges weakly to 0 in $L^2([0,T]\times\Omega)$. We then have shown $\displaystyle\limsup_{\de\to0^+}\int w\ccdot G_2\,dxdt$ is bounded by the RHS of \eqref{omega:pt}.

 And lastly, for the boundary integral $-\int_{[t_1,t_2]\times\pa\Omega_\de} w \ccdot B_\nm w$, since $\nm$ is chosen to be the outward normal of $\pa\Omega_\de$ for $0\le\de<1/$curvature$(\pa\Omega)$, there exists a straight path $X_{\theta}:=x_0+\theta\,\de\,\nm(x_0)$ for $0\le\theta\le1$ that connects any point $x_0=X_0\in\pa\Omega_\de$ to the closest neighbor $x_1=X_1\in\pa\Omega$. Some Geometric Analysis reveals that $\nm(x_0)\bigr|_{\pa\Omega_\de}=\nm(x_1)\bc$. It then follows \[B_\nm(x_1)-B_\nm(x_0)=(x_1-x_0)\int_0^1{\pa\over\pa\nm} B_\nm(X(\theta))\,d\theta\le C\de|\pa_x\vB|_\infty .\]Then, together with the assumption $B_\nm\bc\ge0$, it guarantees
\be\label{bc:int}-\int_{[t_1,t_2]\times\pa\Omega_\de} w \ccdot B_\nm w\le C\de\int_{t_1}^{t_2}|\pa_{x}\vB|_{\infty}\|w\|^2_{L^2(\pa\Omega_\de)}.\ee
Furthermore, the Divergence Theorem implies\[\|w\|^2_{L^2(\Omega_\de)}=\int_{\pa\Omega_\de}(|w|^2\nm)\ccdot\nm=\int_{\Omega_\de}\nc(|w|^2\nm)\le C\|w\|_{L^2(\Omega_\de)}\|w\|_{H^1(\Omega_\de)}\]Use the definition of $w$ and the scaling invariance $\|\nabla\psi_\de^x\|_{L^1 }=C/\de$ to continue,
\[...\le  C\|\cK\Va\|_{L^2}(1+\|\nabla\psi^x_\de\|_{L^1 })\|\cK\Va\|_{L^2}\le (C/\de)\en{V}_m^2.\]Combined with \eqref{bc:int}, it shows  the boundary integral is bounded as desired. We have finished showing the estimates on $\cK V$ in \eqref{omega:pt}.

The study of $\pt^{m-1}V$ involves  a very similar strategy. Let $\alpha=(m-1,0,....0)$ in \eqref{Va} and convolve it with ${d\over dt}\psi_\de^t$,
\be\label{pt:a}B_0\pt  \tilde{w} +\sum_{j=1}^{\mD} B_j\pa_{x_j} \tilde{w}+\cL[\tilde{w}] =\tilde{G}_1+\tilde{G}_2\ee
where \[\begin{split} \tilde{w} :=&\pt\psi_\de^t*\pt^{m-1}V=\pt^{m}\psi_\de^t*V\\ \tilde{G}_1:=&\pt\psi_\de^t*F^\alpha\\
\tilde{G}_2:=&\left[B_0\pa_t+\sum_{j=1}^{\mD} B_j\pa_{x_j},\,\pt\psi_\de^t*\right]\pt^{m-1}V.\end{split}\]
This system is well defined for $(t,x)\in[\de,T-\de]\times\Omega$ and, in particular, $\tilde{w}$ retains the same regularity as  $V$. Thus, $\tilde{w}$ is $C^1$ and satisfies the same solid-wall boundary condition\[\tilde{w}\ccdot\nm\bc=0.\]
Now, follow the same procedure that leads to \eqref{div:thm}, knowing that it can be performed all the way up to $[\de,T-\de]\times\overline{\Omega}$ and the $\cL[\tilde{w}]$ term in \eqref{pt:a} disappears as $\int_\Omega \tilde{w}\ccdot\cL[\tilde{w}]dx=0$ due the above boundary condition,\be\label{div:thm:t}\begin{split}\int_{\Omega}\tilde{w}\ccdot B_0\tilde{w}\Bigr|_{t_0}^{t_1}=&\int_{[t_0,t_1]\times\Omega}\left(2\tilde{w}\ccdot(\tilde{G}_1+\tilde{G}_2)+\tilde{w}\ccdot (div_{t,x}\vB)\tilde{w}\right)-\int_{[t_0,t_1]\times\pa\Omega} \tilde{w} \ccdot B_\nm \tilde{w}\\
\le&\int_{[t_0,t_1]\times\Omega}\left(2\tilde{w}\ccdot(\tilde{G}_1+\tilde{G}_2)+\tilde{w}\ccdot (div_{t,x}\vB)\tilde{w}\right)\end{split}\ee
for $\de\le t_0< t_1\le T-\de$.

Fix $t_1,t_2$ and pass the limit as $\de\to0^+$ in \eqref{div:thm:t} just like what we did for \eqref{div:thm}, we prove \eqref{omega:pt} for $0<t_1<t_2<T$.

\end{proof}

\begin{lemma}\label{lemma:L}Under the same assumption as Theorem \ref{thm:linear}\be\label{LV:k}\begin{split}\en{\pt^k V}_{m-k}\le C\Bigl(\en{\cK V}_{m-1}+\en{\pt^mV}_0+&\en{V}_{m-1}+ \en{F}_m\\&+\bigl(\en{\pa_{t}B_0}_{m-1}+\sum_{j=1}^{\mD}\en{B_j}_m\bigr)\en{V}_m\Bigr)\end{split}\ee for any $0\le k\le m$.
\end{lemma}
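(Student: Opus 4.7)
The plan is downward induction on $k$, running from $k=m$ to $k=0$. The base case $k=m$ is immediate since $\en{\pt^m V}_0$ appears explicitly on the right-hand side of \eqref{LV:k}. Assume now that \eqref{LV:k} has been proved for every index exceeding $k$. Decompose
\[\en{\pt^k V}^2_{m-k}=\sum_{|\alpha|+|\beta|\le m-k}\|\pt^\alpha\px^\beta\pt^k V\|^2_{L^2}.\]
Every summand with $|\alpha|\ge1$ is a space-time derivative of $\pt^{k+1}V$ of total order at most $m-k-1$, so it is already controlled by $\en{\pt^{k+1}V}_{m-k-1}$ via the inductive hypothesis. The only new quantity to estimate is $\|\pt^k V\|_{H^{m-k}}$.

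Since the outward normal $\nm$ is time-independent, $\pt^k u\ccdot\nm\bigr|_{\pa\Omega}=\pt^k(u\ccdot\nm)\bigr|_{\pa\Omega}=0$, so the velocity component of $\pt^k V$ still satisfies the solid-wall boundary condition. Apply \eqref{elliptic:bc} to $\pt^k u$ and combine with the trivial bound $\|\pt^k\rho\|_{H^{m-k}}\le C(\|\nabla\pt^k\rho\|_{H^{m-k-1}}+\|\pt^k\rho\|_{L^2})$ for the density component. This is the analogue of \eqref{elliptic:LK} with the Poincar\'e step on $\rho$ dropped, which avoids needing any zero-mean hypothesis on $\pt^k\rho$, and yields
\[\|\pt^k V\|_{H^{m-k}}\le C\bigl(\|\cL[\pt^k V]\|_{H^{m-k-1}}+\|\cK[\pt^k V]\|_{H^{m-k-1}}+\|\pt^k V\|_{L^2}\bigr).\]
The $\cK$-term is absorbed into $\en{\cK V}_{m-1}$ because $\cK$ commutes with $\pt^k$ and is purely spatial; the $L^2$-term is absorbed into $\en{V}_{m-1}$ since $k\le m-1$.

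For the $\cL$-term, use \eqref{lin:V} to write $\cL[\pt^k V]=\pt^k F-\pt^k(B_0\pt V)-\sum_{j=1}^{\mD}\pt^k(B_j\pa_{x_j}V)$ and expand by Leibniz in both $\pt$ and $\px$. The forcing contributes $\en{F}_m$. The principal Leibniz summand $B_0\pt^{k+1}V$ (no derivatives on $B_0$) is bounded in $H^{m-k-1}$ via $\|B_0\|_{L^\infty}\le 2$ from the eigenvalue assumption, giving $\en{\pt^{k+1}V}_{m-k-1}$, which is already controlled by the inductive hypothesis. Every remaining summand carries at least one derivative on a coefficient; the product estimate \eqref{cal:ineq} then converts it into a quantity bounded by $\en{\pa_{t,x}B_0}_{m-1}\en{V}_m$ or $\en{B_j}_m\en{V}_m$, matching precisely the right-hand side of \eqref{LV:k}. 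The main obstacle is purely bookkeeping: the statement allows no $\en{B_0}_m$ on the right, only $\en{\pa_{t,x}B_0}_{m-1}$, so the Leibniz expansion must be arranged so that the unique term in which no derivative hits $B_0$ is exactly the one paired with $\pt^{k+1}V$ and absorbed by induction, while every other term carries at least one derivative of $B_0$ that fits into the $\en{\pa_{t,x}B_0}_{m-1}$ budget.
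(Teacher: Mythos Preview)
Your proof is correct and mirrors the paper's own argument: both run downward induction on $k$, isolate $\cL[\pt^k V]$ from the time-differentiated equation, absorb the undifferentiated principal term $B_0\pt^{k+1}V$ via $|B_0|_\infty\le2$ together with the inductive hypothesis, and close with the elliptic estimate \eqref{elliptic:LK}. (One minor remark: you write $\en{\pa_{t,x}B_0}_{m-1}$ while the lemma as stated carries only $\en{\pa_t B_0}_{m-1}$; since spatial derivatives of $B_0$ genuinely appear when the $H^{m-k-1}$ norm falls on the product $B_0\pt^{k+1}V$, your version---which matches $\cE(B)$ in Theorem~\ref{thm:linear}---is what both you and the paper actually establish.)
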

\begin{proof}We perform induction w.r.t. $k$.

First, when $k=m$, it is trivial.

Assume \eqref{LV:k} is true for $k= n+1$
\be\label{LV:n}\en{\pt^{n+1}V}_{m-n-1}\le\mbox{RHS}\eqref{LV:k}.\ee
Apply $\pt^n$ on \eqref{lin:V},
\be\label{pt:n}B_0\pt^{n+1}V+\cL[\pt^nV]=F'\ee
where\[F':=F-\pt^{n}(B_0\pt V)+B_0\pt^{n+1}V+\pt^{n}\Bigl(\sum_{j=1}^{\mD}B_j\pa_{x_j} V\Bigr).\]
It is easy to check that $\en{F'}_{m-n-1}\le\mbox{RHS}\eqref{LV:k}$ by virtue of \eqref{cal:ineq}. Likewise
\[\en{B_0\pt^{n+1}V}_{m-n-1}\le|B_0|_\infty\en{\pt^{n+1}V}_{m-n-1}+\mbox{RHS}\eqref{LV:k}\]
Due to the inductive assumption \eqref{LV:n} and the assumption $1/2<B_0<2$, this term is also bounded as desired. Plugging all these estimates into \eqref{pt:n} yields
\be\label{LVV}\en{\cL[\pt^nV]}_{m-n-1}\le \mbox{RHS}\eqref{LV:k}.\ee

Finally, note that elliptic estimate \eqref{elliptic:LK} implies,
\[\begin{split}\en{\px(\pt^nV)}_{m-n-1}\le &C(\en{\cL[\pt^nV]}_{m-n-1}+\en{\cK[\pt^n V]}_{m-n-1}+\en{\pt^nV}_{m-n-1}).\end{split}\]
Combined with \eqref{LVV}, it yields the bound on  $\en{\px(\pt^nV)}_{m-n-1}$ as we expect. Since all the highest order derivatives in $\en{\pt^nV}_{m-n}$ contain at least one $x$ derivative unless it's $\pt^mV$, the above estimate sufficiently   leads to \eqref{LV:k} for $k=n$.
\end{proof}

Obviously, Lemma \ref{lemma:K:pt} and \ref{lemma:L} also apply to any lower order derivatives that appear in the definition of $\en{V}_m$. The very lowest norm $\en{V}_0$ is merely $L^2$ and can be bounded using classical approaches. Therefore, combine all these estimates and finish proving Theorem \ref{thm:linear}.

Now, for the nonlinear system \eqref{sys:V}, we set $B=(A_0(V),A_1(V),...,A_{\mD}(V))$ and $F=0$ in  Theorem \ref{thm:linear}. All conditions are satisfied, esp. the structural assumption \eqref{structure} due to the existence of an actual vorticity equation for the compressible Euler equations. We then use the definitions of $\cE$ and $A_j(V)=A_0(V)u_j$  to arrive at: there exists a  polynomial $\pi(\ccdot)$ of degree $m-1$ with positive constant coefficients only depending on $m$ s.t.\[\cE(B)\le \sum_{k=0}^m|A_0^{(k)}(V)|_\infty\en{V}_m\pi(\en{V}_m).\]Since $A_0(0)=I$ and is defined using  the pressure law $p(\ccdot)\in \cC^\infty$ with $p(1)=0$, $p'(1)=1$,  there exist constants $C_1, C_2$ dependent on $p(\ccdot)$ and $m$ s.t. \[\sum_{k=0}^m|A_0^{(k)}(V)|_\infty\le C_1\mmbox{and}1/2\le A_0(V)\le 2\mmbox{if}|V|_{\infty}\le C_2.\]
Plug it into \eqref{en:ineq} to arrive at, for any $\en{V}_m\le C_2$\[
\en{V}_m(t)\le C\Bigl(\en{V_0}_m+C_1\pi(C_2) \en{V}^2_m +\int_0^tC_1\pi(C_2) \en{V}^2_m\,dt'\Bigr)\]
which further implies, \[
\en{V}_m(t)\le 2C \en{V_0}_m+{1\over C_3}\int_0^t  \en{V}^2_m\,dt' \]if $\en{V}_m$ is also bounded by $C_3= 1/(2CC_1\pi(C_2))$,

By a simple comparison argument, one can show that $\en{V}_m(t)\le a(t)$ for $a(t)$ solving
\[a(t)=a(0)+{1\over C_3} \int_0^ta^2(t') \,dt',\quad a(0)=(2C+1)\en{V_0}_m\]
as long as $\en{V}_m(t)\le\min\{ C_2,\,C_3\}$. Since this integral equation of $a(t)$ is equivalent to a Riccati type ODE, we solve to get $a(t)={1\over {1\over a(0)}-{t\over C_3}}$. Thus,\[\en{V}_m(t)\le{1\over{1\over(2C+1)\en{V_0}_m}-{t\over C_3}}\qquad\mbox{as long as the RHS}\le \min\{ C_2,\,C_3\}\]

Finally, since all the time derivatives in the definition of $\en{V}_m$ can be expressed in terms of spatial derivatives via system \eqref{sys:V}, one has\[\en{V_0}_m\le C_4\|V_0\|_{H^m}\mmbox{if}\|V_0\|_{H^m}\le C_5.\]
\begin{theorem}\label{thm:uniform}Let $m>\mD/2+1$. There exists constant $C^*$, $C^{**}$, $C^{***}$ only dependent on $m,\Omega$ and the pressure law s.t  the rescaled compressible Euler equations \eqref{sys:V} with initial data $V_0$ satisfying the compatibility condition admits a unique solution in the class of \[\sum_{j=0}^m\cC^j\Bigl(\Bigl[0,{C^*\over\|V_0\|_{H^m}}\Bigr];\,H^{m-j}(\Omega)\Bigr)\mmbox{if}  \|V_0\|_{H^m}\le C^{**}.\]The solution is uniformly bounded $\|V\|_{H^{m}(\Omega)}\le C^{***}\|V_0\|_{H^m}$ on this finite time interval.

Consequently, the fast oscillatory system \eqref{sys:V:ep} with initial data $(r_0,u_0)$ satisfying the compatibility condition admits a unique solution in the class of \[\sum_{j=0}^m\cC^j\Bigl(\Bigl[0,{C^*\over\|(r_0,u_0)\|_{H^m}}\Bigr];\,H^{m-j}(\Omega)\Bigr)\mmbox{if}\|(r_0,u_0)\|_{H^m}\le C^{**}/\ep.\]The solution is uniformly bounded $\|(r,u)\|_{H^{m}(\Omega)}\le C^{***}\|(r_0,u_0)\|_{H^m}$ on this finite time interval. \end{theorem}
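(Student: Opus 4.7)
The plan is to combine a standard local existence result for quasilinear symmetric hyperbolic systems with characteristic boundary (quoted, since the paper explicitly sets existence aside) with the a priori bound already derived right before the theorem, and then close the estimate by a standard continuation argument. Concretely, on any interval $[0,T^*]$ on which a classical solution $V\in\sum_{j=0}^m\cC^j([0,T^*];H^{m-j}(\Omega))$ exists and satisfies $\en{V}_m\le\min\{C_2,C_3\}$, Theorem~\ref{thm:linear} applied to the linear system obtained by freezing $B=(A_0(V),\ldots,A_{\mD}(V))$ and the Riccati comparison already displayed yield
\[
\en{V}_m(t)\le \frac{1}{\frac{1}{(2C+1)\en{V_0}_m}-\frac{t}{C_3}}.
\]
I would then fix $C^*$ and $C^{**}$ so that, whenever $\|V_0\|_{H^m}\le C^{**}$ and $t\le C^*/\|V_0\|_{H^m}$, the right-hand side stays strictly below $\min\{C_2,C_3\}$; this is the bootstrap step that upgrades the a priori estimate into a genuine uniform bound.

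The key steps, in order, would be: (i) use the compatibility condition to express $\pt^k V_0$ for $k\le m$ as polynomials in spatial derivatives of $V_0$ via \eqref{sys:V}, and combine with \eqref{cal:ineq} to obtain $\en{V_0}_m\le C_4\|V_0\|_{H^m}$ whenever $\|V_0\|_{H^m}\le C_5$; (ii) invoke local well-posedness to produce a maximal interval of existence on which $V$ retains the required regularity; (iii) define $T^{**}:=\sup\{T\le C^*/\|V_0\|_{H^m}\colon \en{V}_m\le \min\{C_2,C_3\}\text{ on }[0,T]\}$, and use the Riccati bound together with continuity in time to show $T^{**}=C^*/\|V_0\|_{H^m}$; (iv) set $C^{***}$ as the resulting uniform constant in $\en{V}_m(t)\le C^{***}\en{V_0}_m\le C^{***}C_4\|V_0\|_{H^m}$, absorbing $C_4$ into $C^{***}$ after renaming.

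For the second half of the statement, I would simply undo the rescaling $u\to\ep u$, $r\to\ep r$, $t\to t/\ep$ that produced \eqref{sys:V} from \eqref{sys:V:ep}. Under this change of variables, the condition $\|V_0\|_{H^m}\le C^{**}$ becomes $\|(r_0,u_0)\|_{H^m}\le C^{**}/\ep$, while a lifespan of size $C^*/\|V_0\|_{H^m}$ in the rescaled time becomes a lifespan of size $C^*/\|(r_0,u_0)\|_{H^m}$ in the original time, since the factor of $\ep$ cancels. The uniform bound transfers identically.

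The main obstacle I anticipate is not the nonlinear closure itself, which is handled cleanly by the Riccati inequality, but rather the transfer from the purely spatial norm $\|V_0\|_{H^m}$ to the mixed space-time triple norm $\en{V_0}_m$ at $t=0$. Time derivatives of $V$ at $t=0$ must be read off the equation \eqref{sys:V}, and each substitution introduces products of spatial derivatives of $V_0$; only the compatibility assumption guarantees that these expressions both make sense and remain tangent to the solid-wall boundary in the appropriate traces, so that \eqref{cal:ineq} can be applied term by term. Once this initial data estimate is secured and a pointwise smallness condition is imposed to keep $A_0(V)$ uniformly in $[1/2,2]$, the rest of the proof is essentially mechanical bookkeeping on the constants produced by Theorem~\ref{thm:linear}.
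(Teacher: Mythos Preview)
Your proposal is correct and follows essentially the same approach as the paper: the paper also applies Theorem~\ref{thm:linear} with $B=(A_0(V),\ldots,A_{\mD}(V))$, closes the nonlinear estimate via the same Riccati comparison, converts $\en{V_0}_m$ to $\|V_0\|_{H^m}$ using the equation and compatibility, quotes local existence with a continuation argument, and then undoes the rescaling to pass from \eqref{sys:V} to \eqref{sys:V:ep}. Your explicit bootstrap formulation with $T^{**}$ makes precise a step the paper leaves implicit, but there is no substantive difference in strategy.
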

Here, we assumed the existence theory is automatically valid for the compressible Euler system. For   linear Hyperbolic PDE systems with characteristic boundary, the existence is studied in Lax \& Phillips \cite{LaxPh} in terms of strong $L^2$ solutions and in Rauch \cite{Rauch:char} for $H^m$ regularity. The nonlinear case is studied in Schochet \cite{Sch:Euler,Sch:general}. Our Theorem  \ref{thm:linear}, \ref{thm:uniform} and their proofs provide clear evidence that the continuation principle can be performed on a uniform time interval independent of $\ep$. In particular, when a $\de\nm\ccdot\nabla u$ term is added to convert the original singular boundary matrix to a nonsingular one, the uniform estimate in Theorem \ref{thm:linear} remains valid because: 1. boundary matrix is still dissipative; 2. a vorticity equation still exists. 

We also note by passing that a crucial ``maximal positivity'' condition on the boundary matrix and boundary condition is needed for all these existence theories
to work (cf. Rauch \cite{Rauch:char} and Schochet \cite{Sch:general}), but the system  dealt with here is a canonical case satisfying this condition for sufficiently small $\ep$.

\section{Extension to the Rotating Shallow Water Equations}\label{sec:RSW}
We  can extend the above framework to the  2D Rotating Shallow Water (RSW) equations in a very natural way. Consider a thin layer of fluid moving under the
gravitational force and Coriolis force. Assume the fluid is vertically homogeneous so the dynamics is essentially in 2D. Let $u=\bpm u_1\\u_2\epm$ be the velocity field and $\rho$ be the height perturbation against a constant background height. Then, we formulate
\begin{align}
\label{RSWrho}\pt\rho+\nc(\rho\vu)+{1\over\ep}\nc\vu&=0,\\
\label{RSWu} \pt\vu+\vu\cn\vu+{1\over\ep}\nabla\rho+ {1\over\ep}\vu^\perp&=0.
\end{align}
Here, $\vu^\perp:=\bpm-u_2\\u_1\epm$ reflects the Coriolis effect due to a rotating frame. Impose boundary conditions as
\[u\ccdot\nm\bc=0,\qquad\int_\Omega\rho=0.\]

The analogue between the above system and the compressible Euler equations is as follows. The pressure law for the RSW equations is $p(\rho)={1\over2}\rho^2$, which is due to the gravitational force. The singular parameter $\ep$ is the 2D Froude number and plays the same role as the Mach number. The elliptic operators, with the same notations as before, are
\[\cL U:=\bpm\nc u\\\nabla\rho+\vu^\perp\epm,\]
\[\cK U:=\pa_x u_2-\pa_y u_1-\rho.\]
The RSW system is endowed with a vorticity equation\be\label{omega:RSW}\pt(\cK U)+\nc(\rho\,\cK U)=0.\ee

It remains to justify the Propositions in Section \ref{sec:ell} regarding elliptic estimates and orthogonal projections. The elliptic estimates part is quite straightforward since the $\cL$, $\cK$ defined here are only lower order perturbation of their counterparts in Section \ref{sec:ell}, and one can always bound norms of $\rho$ by norms of $\nabla\rho$. For the orthogonal projection part, we essentially need to find an elliptic PDE to define $\cP$, $\cQ$ which is as convenient as \eqref{eq:phi}. Indeed, since $(\rho,u)\in\kernel\cL$ iff $u=(\nabla\rho)^\perp$, we characterize $\cP$ in the following way,
\[\cP U=\bpm\phi\\(\nabla\phi)^\perp\epm\mmbox{with}(\Delta-1)\phi=\pa_x u_2-\pa_y u_1-\rho,\quad \phi\bc=0,\] 
and, in a much better form,
\be\label{LS}\cP=\cK^*(\cK\cK^*)^{-1}_{D}\cK\ee
where $\cK^*$ is the formal adjoint of $\cK$ and subscript $D$ indicates the Dirichlet boundary condition associated with the elliptic inverse.

The identity \eqref{LS} suggests a parallel argument in Linear Algebra and, in particular, the method of Least Squares.  Without proofs, we state the following set of duality relations which, unlike the Euler equations, are independent of domain topology,
\[\begin{split}\image\cQ=\kernel\cK=\image\cL^*=&\kernel\cP,\\
\image\cP=\kernel\cL=\image\cK^*=&\kernel\cQ.\end{split}\]
Here, $\cL^*=-\cL$.

The last point we would like to make is the analogue to Lemma \ref{ff:lemma} regarding cancellation of ``fast-fast'' interaction under the projection $\cP$. This is still valid for RSW equations due to a simple observation: the vorticity equation \eqref{omega:RSW} results from applying $\cK$ to \eqref{RSWrho}, \eqref{RSWu} and using $\cK\cL=0$. In other words, the identity
\[\cK\bpm\nc(\rho\vu)\\\vu\cn\vu\epm=\nc(\rho\,\cK U)\]
is true regardless of the equation. Now, let $U=U^Q$ be the fast part of a solution. Then, by the duality relations, $\cK U^Q=0$, and the above identity becomes\[\cK\bpm\nc(\rho^Q\vu^Q)\\\vu^Q\cn\vu^Q\epm\equiv0.\]
Again, by the duality relations, we obtain\[\cP\bpm\nc(\rho^Q\vu^Q)\\\vu^Q\cn\vu^Q\epm\equiv0.\]

\section{Acknowledgements}
The author is indebted to Professor J. Rauch for the critical discussions on initial-boundary problems of hyperbolic PDEs. The author also wishes to thank Professors D. Masmoudi and L. Nirenberg  for their valuable advice. Many ideas in this paper were originated in \cite{BC:HYP08} with the encouragement and support from Professor E. Tadmor. Thank you!

  \end{document}